\documentclass[11pt]{amsart}

\usepackage{amsmath}
\usepackage{fullpage}
\usepackage{xspace}
\usepackage[psamsfonts]{amssymb}
\usepackage[latin1]{inputenc}
\usepackage{graphicx,color}
\usepackage[curve]{xypic} 
\usepackage{hyperref}
\usepackage{graphicx}

\usepackage{amsmath}%
\usepackage{amsthm}%
\usepackage{amscd}
\usepackage{amsfonts}%
\usepackage{amssymb}%
\usepackage{graphicx}

\usepackage{mathrsfs}

\usepackage{tikz}
\usetikzlibrary{matrix,arrows}

\usepackage{tikz-cd}

\newtheorem{theorem}{Theorem}[section]

\newtheorem{conjecture}[theorem]{Conjecture}

\newtheorem{corollary}[theorem]{Corollary}

\newtheorem{lemma}[theorem]{Lemma}

\theoremstyle{remark}

\numberwithin{equation}{section}

\newcommand{\pfrak}{\mathfrak{p}}
\newcommand{\Pfrak}{\mathfrak{P}}

\newcommand{\Hcal}{\mathscr{H}}

\newcommand{\Jcal}{\mathscr{J}}

\newcommand{\Z}{\mathbb{Z}}
\newcommand{\C}{\mathbb{C}}

\newcommand{\Q}{\mathbb{Q}}
\newcommand{\R}{\mathbb{R}}

\newcommand{\rad}{\mathrm{rad}}

\newcommand{\Norm}{\mathrm{Norm}}

\newcommand{\Gal}{\mathrm{Gal}}

\newcommand{\Nm}{\mathrm{Nm}}

\newcommand{\hbf}{\mathbf{h}}

\makeatletter
\@namedef{subjclassname@2020}{%
  \textup{2020} Mathematics Subject Classification}
\makeatother

  \DeclareFontFamily{U}{wncy}{}
    \DeclareFontShape{U}{wncy}{m}{n}{<->wncyr10}{}
    \DeclareSymbolFont{mcy}{U}{wncy}{m}{n}
    \DeclareMathSymbol{\Sha}{\mathord}{mcy}{"58}

\begin{document}
\title[]{Power-saving bounds for Thue--Mahler and Mordell equations}
\author{Hector Pasten}
\address{ Departamento de Matem\'aticas,
Pontificia Universidad Cat\'olica de Chile.
Facultad de Matem\'aticas,
4860 Av.\ Vicu\~na Mackenna,
Macul, RM, Chile}
\email[H. Pasten]{hector.pasten@uc.cl}%

\thanks{H.P. was supported by ANID Fondecyt Regular grant 1230507 from Chile.}
\date{\today}
\subjclass[2020]{Primary: 11J25; Secondary: 11J86, 11D25} %
\keywords{Thue--Mahler equations, Mordell's equation, Szpiro's conjecture, linear forms in logarithms, regulators}%

\begin{abstract} We prove a new effective bound for cubic Thue--Mahler equations with power-saving dependence on the regulator. This has some applications. First, we improve Stark's bound $\log \max\{|x|,|y|\} \ll_\epsilon |k|^{1+\epsilon}$ for the integer solutions of Mordell's equation $y^2=x^3+k$ ($k$ a non-zero integer) by reducing the exponent $1+\epsilon$ to $1/2+\epsilon$; this is the first power-saving improvement without restrictions on $k$ in more than 50 years. Secondly, for integer squares and cubes of size $\asymp T$ we improve the known unconditional separation lower bound $(\log T)^{1-o(1)}$ obtained by Stark in 1973 to $(\log T)^{2-o(1)}$. Finally, we obtain a power-saving improvement in the conductor aspect of the strongest currently available bounds for Frey's height conjecture (a strengthening of Szpiro's conjecture) in the case of elliptic curves over $\mathbb{Q}$ with integral $j$-invariant.
\end{abstract}

\maketitle



\section{Introduction}

The goal of this article is to obtain a new bound for the size of the solutions of Thue--Mahler equations with a power-saving improvement in the regulator (and discriminant) aspects. This has applications to Mordell's equation, lower bounds for the difference of a square and a cube,  and  Szpiro's conjecture. Since the result for Thue--Mahler equations is more technical to state, let us begin the discussion with the applications.

\subsection{Mordell's equation} For a non-zero integer $k$, the study of the Diophantine equation 
$$
y^2=x^3+k
$$ 
goes back at least to Bachet in the 17th century. These equations define what is now known as Mordell elliptic curves, after the substantial work of Mordell on this topic, see for instance \cite{MordellTalk}. Among other things, Mordell proved that for any given integer $k\ne 0$, the equation has only a finite number of integer solutions $x$ and $y$. The result, however, was ineffective.

The first effective bound for the size of the solutions in terms of $k$ came from the work of Baker \cite{Baker} in 1968, where he proved the effective estimate
$$
\log \max\{|x|,|y|\} \ll |k|^{10000}
$$
with an effective implicit constant. The dependence on $k$ was later improved by Stark in 1973 when he proved \cite{Stark}
$$
\log \max\{|x|,|y|\} \ll_\epsilon |k|^{1+\epsilon}
$$
with an effective implicit constant depending only on $\epsilon>0$. This was later refined by several authors including Sprindzuk \cite{Sprindzuk}, Juricevic \cite{Juricevic}, and von K\"anel \cite{vonKanel} but, to the best of our knowledge, always in the regime $|k|^{1+o(1)}$ (although von K\"anel's work gives an improvement when $k$ has large exponents in its prime factorization; compare with Theorem \ref{ThmMordellTruncated} below). We prove the first power saving since Stark's bound more than five decades ago, replacing $|k|^{1+\epsilon}$ by $|k|^{1/2+\epsilon}$.
\begin{theorem}[Bound for Mordell's equation]\label{ThmMordell}  Let $k$ be a non-zero integer. The solutions of $y^2=x^3+k$ in $\Z$ satisfy
$$
\log \max\{|x|,|y|\} \ll |k|^{1/2} (\log(2|k|))^4
$$
where the implicit constant is effective and absolute.
\end{theorem}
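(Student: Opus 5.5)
The plan is to reduce Mordell's equation to a cubic Thue equation, which is a special case of Thue--Mahler with empty set of primes. We then apply the paper's main bound, whose dependence on the regulator $R$ of the relevant cubic field is power-saving. Finally, we control $R$ and the height of the form by $|k|^{1/2+o(1)}$.

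\emph{Reduction to binary cubic forms.} Given an integral solution $(x,y)$ of $y^2=x^3+k$, we use the classical correspondence of Mordell. Integral points on $y^2=x^3+k$ correspond to representations $F(u,v)=1$, where $F$ ranges over a finite set of binary cubic forms of discriminant $-108k$ (or a fixed multiple of $k$). Concretely, $4k=F$'s discriminant up to bounded factors, and $x,y$ are recovered from the Hessian and the Jacobian covariant of $F$ evaluated at $(u,v)$. Hence
$$\log\max\{|x|,|y|\}\ll \log\max\{|u|,|v|\}+\log H(F)+1.$$
By reduction theory we may choose $F$ in each $\mathrm{GL}_2(\Z)$-class reduced, so that $H(F)\ll |k|^{O(1)}$ and $\log H(F)\ll\log(2|k|)$.

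\emph{Applying the main theorem.} For each such $F$, irreducible over $\Q$, let $K$ be the cubic field generated by a root, with discriminant $D_K$ dividing $\mathrm{disc}(F)\ll |k|$. The main Thue--Mahler theorem with $S=\emptyset$ will give a bound of the shape
$$\log\max\{|u|,|v|\}\ll R_K^{1/2}\,|D_K|^{\epsilon'}\,(\log H(F))^{c}$$
or similar, with an explicit polylog factor. We then use the standard estimate $R_K\le h_KR_K\ll |D_K|^{1/2}(\log|D_K|)^2$ from the class number formula (Landau/Louboutin bound on $L(1,\chi)$-type residues), giving $R_K^{1/2}\ll|k|^{1/4}$. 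This is better than needed, so possibly the main theorem's dependence is something like $R_K\cdot(\log)^c$ or $R_K^{1/2}|D|^{1/4}$. Either way, combining the theorem with $R_K\ll|k|^{1/2}(\log 2|k|)^2$ should yield $|k|^{1/2}(\log 2|k|)^4$. The main obstacle is bookkeeping the exact shape of the Thue--Mahler bound so the powers of $\log$ total $4$. If $F$ is reducible, it has a rational linear factor. Then $F(u,v)=1$ forces a linear form and a quadratic form to take values $\pm1$, which bounds $u,v$ polynomially in $H(F)$ by elementary means, or via Pell-type units in a quadratic field with regulator $\ll |k|^{1/2}\log|k|$.

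\emph{Main difficulty.} The genuine difficulty lies inside the main Thue--Mahler theorem. There one must run linear forms in logarithms with a power-saving in the regulator. This requires choosing a fundamental system of units whose heights are balanced, and exploiting that only one unit exponent is free after using the unit equation/Siegel identity. Taking that theorem as given, the remaining work for this statement is the covariant correspondence and the regulator upper bound $R_K\ll|D_K|^{1/2}(\log|D_K|)^2$. We must also check that no dependence on $h_K$ or on the number of classes of forms enters multiplicatively. It does not, since we bound each form separately.
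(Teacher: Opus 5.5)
Your route is the paper's route. You use the form $S^3-3xST^2+2yT^3$ of discriminant $-108k$ and Baker's $\mathrm{GL}_2(\Z)$-reduction to a form $F$ with $H(F)\le D^{1/2}$. You then apply the cubic Thue--Mahler theorem to $F(u,v)=1$ and recover $x,y$ from the Hessian and Jacobian covariants. The reducible case is elementary, as you say.

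\textbf{The gap.} The step that actually produces $|k|^{1/2}(\log 2|k|)^4$ is never carried out. You guess that the main theorem has the shape $R_K^{1/2}|D_K|^{\epsilon'}(\log H)^c$. You then notice this would give $|k|^{1/4+\epsilon}$, and conclude ``either way.'' That is not an argument: both the exponent of $|k|$ and the power of the logarithm depend entirely on the shape of that input. The $|k|^{1/4+\epsilon}$ outcome should have signalled that the guess is wrong, not that the theorem is stronger than needed.

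\textbf{The correct input.} Take Theorem \ref{ThmThueMahler1} with $r=0$ and $m=w=1$. Then $S$ is the set of Archimedean places of $K$ (not empty), $s\le 3$, $R_S=R_K$, and $P_+=2$. The theorem gives
\[
\log\max\{|u|,|v|\}\ll \log H + R_K\log(2D)\,\log\bigl(2+R_K\log(2D)\bigr),
\]
which is linear in $R_K$, with $\log H$ entering only additively.

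The paper's power saving is $R^2\to R$, not $R\to R^{1/2}$. It comes from applying geometry of numbers once to $O_{K,S}^\times\langle\omega\rangle$ (Lemma \ref{LemmaReg}), instead of controlling units and an extra algebraic factor separately. It does not come from ``only one unit exponent being free.''

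\textbf{The bookkeeping.} Lemma \ref{LemmaLandau} with $d=3$ gives
\[
R_K\le \hbf R_K\ll D_K^{1/2}(\log D_K)^2 .
\]
The divisibility $D_K\mid D=108|k|$ then gives $R_K\ll |k|^{1/2}(\log 2|k|)^2$. The four logarithms are accounted for as follows: two from Landau, one from $\log(2D)$, and one from $\log(2+R_K\log 2D)\ll\log(2|k|)$. The term $\log H\le\tfrac12\log D$ and the $O(\log|k|)$ loss from the covariants are absorbed.

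This is exactly the paper's argument. The paper phrases it via Theorem \ref{ThmMordellTruncated} with $\underline{D}$ in place of $D$ and then uses $\underline{k}\le|k|$.
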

An immediate consequence of Theorem \ref{ThmMordell} is:
\begin{theorem}[Gap between squares and cubes] For all integers $x,y$ with $x^3\ne y^2$ we have
$$
|x^3-y^2| \gg \frac{ (\log X)^2}{ (\log\log X)^8}, \quad X=\max\{3,|x|,|y|\}
$$
where the implicit constant is effective and absolute.
\end{theorem}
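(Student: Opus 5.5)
Set $k = y^2 - x^3$, which is a non-zero integer because $x^3 \ne y^2$. Then $(x,y)$ is an integral point on $y^2 = x^3 + k$. By Theorem \ref{ThmMordell},
$$
\log X \le \log\max\{3,|x|,|y|\} \le C\,|k|^{1/2}(\log(2|k|))^4
$$
for an absolute effective constant $C$. When $|x|,|y|\le 3$ the bound is trivial, since $\log X = \log 3$ and $|k|\ge 1$. So the plan is simply to invert this inequality to get a lower bound for $|k|$ in terms of $X$.

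The inversion goes as follows. Put $L = \log X$. If $|k| \ge L^2$, there is nothing to prove, because $L^2 \ge L^2/(\log L)^8$ as soon as $\log L\ge 1$; smaller $X$ is absorbed into the constant. Otherwise $|k| < L^2$, so $\log(2|k|) \le \log 2 + 2\log L \ll \log\log X$, where we use $X\ge 3$ and again absorb small $X$ into the constant. Substituting this into the display gives
$$
L \ll |k|^{1/2}(\log\log X)^4 ,
$$
and squaring yields $|k| \gg L^2/(\log\log X)^8$. This is exactly the claim, and the implicit constants remain effective and absolute.

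There is no real obstacle here, only bookkeeping. One must make sure that the case split on $|k|$ versus $(\log X)^2$ correctly handles the factor $\log(2|k|)$. One must also check the degenerate ranges: $X$ near $3$, where $\log\log X$ is small or even negative, and $|k|$ small. In these ranges the inequality holds trivially after enlarging the implicit constant, since $|x^3-y^2|\ge 1$ and the right-hand side is bounded for bounded $X$.
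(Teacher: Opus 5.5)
Your proposal is correct and is exactly the route the paper intends: the paper states this result as an immediate consequence of Theorem \ref{ThmMordell} applied with $k=y^2-x^3$, and your case split on $|k|$ versus $(\log X)^2$ is the natural way to invert that bound. One small correction: since $X\ge 3$ we have $\log\log X\ge\log\log 3>0$, so $\log\log X$ is never negative, only small, and your bounded-range argument covers that case as written.
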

Before our work, all available unconditional lower bounds for $|x^3-y^2|$ in terms of $X$ had the form $(\log X)^{1-o(1)}$, see for instance \cite{Stark}. Thus, our result can be considered as an (admittedly modest) step forward in the direction of Hall's conjecture \cite{Hall, Elkies}.

In fact, we prove a stronger version of Theorem \ref{ThmMordell} which allows truncation in the prime factorization of $k$ as in von K\"anel's work \cite{vonKanel}. For a non-zero integer $n$ we define
$$
\underline{n} = \prod_{p|n} p^{\min\{2, v_p(n)\}}
$$ 
where $p$ varies over (positive) prime divisors of $n$ and $v_p(n)$ is the exponent of $p$ in the factorization of $n$. Since $\underline{n}\le |n|$ we see that Theorem \ref{ThmMordell} is a consequence of the following estimate, which is the main result on Mordell's equation in this article.
\begin{theorem}[Bound for Mordell's equation, with truncation]\label{ThmMordellTruncated}  Let $k$ be a non-zero integer. The solutions of $y^2=x^3+k$ in $\Z$ satisfy
$$
\log \max\{|x|,|y|\} \ll (\underline{k})^{1/2} (\log(2\underline{k}))^2\log(2|k|)\log\left(\underline{k} \log(3|k|)\right)
$$
where the implicit constant is effective and absolute.
\end{theorem}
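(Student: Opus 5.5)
The plan is to reduce Mordell's equation to a cubic Thue--Mahler equation, then apply the paper's main bound for such equations, whose dependence on the regulator is power-saving. The regulator and discriminant of the relevant cubic form are controlled by $\underline{k}$ via class number and regulator estimates.

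\textbf{Step 1: the reduction.} Let $(x,y)$ be an integral solution of $y^2=x^3+k$ and put $g=\gcd(x,y)$ (or a suitable variant that clears common factors). Working in $\Q(\sqrt{k})$, or better in $\Q(\sqrt{-3k})$ through the classical syzygy of binary cubic forms, one writes
$$
4k = (\text{quadratic in } y) \quad\text{and}\quad y+\sqrt{k} = \eta\,\gamma^3
$$
up to ideal classes. This is the classical route of Mordell and Stark: every solution produces a binary cubic form $F$ of discriminant $-108k$ (or $-27\cdot 4k$) and integers $(u,v)$ with $F(u,v)$ equal to a unit times primes dividing $k$.

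To obtain truncation in terms of $\underline{k}$ rather than $|k|$, I would follow von K\"anel. Write $k=k_0 m^6$-style reductions, absorbing sixth powers and the high prime powers into the $S$-unit part. The result is a Thue--Mahler equation
$$
F(u,v)=c\prod_{p\in S}p^{e_p},
$$
where $S$ is the set of primes dividing $6k$, $c$ is bounded in terms of $\underline{k}$, and the discriminant of $F$ is essentially bounded by $\underline{k}$ up to factors of $6$. The number of forms needed is bounded by the $3$-torsion of the class group, but this count is irrelevant for height bounds.

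\textbf{Step 2: apply the main theorem.} I would invoke the paper's Thue--Mahler theorem, which presumably bounds $\log\max|u|,|v|$ by roughly $R^{1/2+o(1)}\cdot\prod_{p\in S}\log p\cdot(\text{logs})$, with $R$ the regulator of the cubic field and $S$-regulator terms. The regulator of the cubic field $K$ of discriminant $D\ll\underline{k}$ would be bounded via $hR\ll D^{1/2}(\log D)^2$ (Landau/Siegel-type). This yields the factor $(\underline{k})^{1/2}(\log 2\underline{k})^2$.

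The $S$-part must contribute only $\log(2|k|)\log(\underline{k}\log 3|k|)$. This means the main theorem must depend on $S$ only through something like $\log\prod_{p\in S}p\le\log|k|$ times a logarithmic factor. Finally one recovers $x,y$ from $(u,v)$: $x$ and $y$ are polynomials of degree $2$ and $3$ in $u,v$, together with a factor $g$ bounded by a power of $|k|$. This loses only a constant factor in the logarithm.

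\textbf{Main obstacle.} The delicate step is the bookkeeping in the reduction. The discriminant, and hence the regulator, of the cubic form must depend on $\underline{k}$ and not on $|k|$. The large prime powers of $k$ must be pushed entirely into the $S$-unit exponents, where the main theorem charges only $\log|k|$-type costs. I would handle it by following von K\"anel's normalization: write $k=ab^2c^3\cdots$ and rescale by cubes and squares. I would then check at each prime $p$ with $v_p(k)\ge3$ that the local contribution to the discriminant of $K$ is bounded by $p^2$.
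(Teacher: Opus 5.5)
There is a genuine gap in your Step 2: it relies on a property that the paper's Thue--Mahler bound does not have. In Theorem \ref{ThmThueMahler1} the set $S$ enters through $(cs)^{2s}$, through $P_+/\log P_+$, and through $R_S$, which by Lemma \ref{LemmaRS} carries the factor $\prod_{\mathfrak{p}\in S_0}\log\Norm(\mathfrak{p})$. In Theorem \ref{ThmThueMahler2} this becomes $(c(r+1))^{6r}\Theta^3Q^3/\log Q$. None of these factors is $O(\log|k|)$ when $S$ is the set of primes dividing $6k$. Already for $k$ prime you pick up $P_+/\log P_+\ge k/\log k$ (and $k^3/\log k$ in version 2), far above the target $k^{1/2+o(1)}$. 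So pushing the high prime powers of $k$ into $S$-unit exponents cannot give the statement.

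Two further points are off. First, your claim that the form discriminant can be made $\ll\underline{k}$ is unsupported: $\mathrm{GL}_2(\Z)$-equivalence preserves the discriminant, and the natural form has discriminant $108|k|$. Second, the main theorem is linear in $R_S$, not $R^{1/2+o(1)}$; the exponent $1/2$ comes from Landau's $\hbf R\ll D_K^{1/2}(\log D_K)^2$.

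The missing observation is that no Mahler part and no normalization of $k$ are needed. Take $G(S,T)=S^3-3xST^2+2yT^3$, whose discriminant has absolute value $D=108|k|$ and which satisfies $G(1,0)=1$. Baker's reduction gives $F=G\circ\gamma$ with $H(F)\le D^{1/2}$, and $F(u,v)=1$ for the coprime pair $(u,v)=\gamma^{-1}(1,0)$. Apply Theorem \ref{ThmThueMahler2} with $r=0$ and $m=1$, so $S$ is just the Archimedean places and $R_S=R$; the reducible case is elementary.

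The full discriminant $D$ enters only logarithmically, via $\log|\Norm(\omega)|=\log D$ in Lemma \ref{LemmaReg}, and this is exactly where the factor $\log(2|k|)$ comes from. The cubic field $K$ enters only through $\hbf R$, and the truncation is automatic. Since $D_K\mid D$, and a cubic field has $v_p(D_K)\le 2$ for $p>3$ (Lemma \ref{LemmaHensel}), you get $D_K\ll\underline{D}\ll\underline{k}$. So the local check you deferred to the end, that primes with $v_p(k)\ge 3$ contribute at most $p^2$ to $D_K$, holds for free and is the whole truncation mechanism.

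The factor $\log(\underline{k}\log(3|k|))$ then comes from solving an inequality of the form $\log X\ll A\log\log X$. Your recovery of $x,y$ from $(u,v)$ through the quadratic and cubic covariants (Hessian and Jacobian) matches the paper.
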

We note that von K\"anel \cite{vonKanel} had a similar bound, but with a factor $\underline{k}$ rather than $(\underline{k})^{1/2}$. 

Let $\rad(n)=\prod_{p|n}p$ be the radical of a non-zero integer $n$. Since $(\underline{n})^{1/2}\le \rad(n)$ we deduce

\begin{theorem}[Bound for Mordell's equation, with radical]\label{ThmMordellrad}  Let $k$ be a non-zero integer. The solutions of $y^2=x^3+k$ in $\Z$ satisfy
$$
\log \max\{|x|,|y|\} \ll \rad(k) (\log(2\, \rad(k)))^2\log(2|k|)\log\left(\rad(k)\log(3|k|)\right)
$$
where the implicit constant is effective and absolute.
\end{theorem}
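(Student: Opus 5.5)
This statement follows directly from Theorem \ref{ThmMordellTruncated}, since it suffices to compare $\underline{k}$ with $\rad(k)$. The plan is to verify the elementary inequality $(\underline{k})^{1/2}\le \rad(k)$ and then substitute it into the right-hand side of Theorem \ref{ThmMordellTruncated}, using that every factor there is nondecreasing in $\underline{k}$.

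First, from the definition $\underline{k}=\prod_{p\mid k}p^{\min\{2,v_p(k)\}}$, each exponent is at most $2$. Hence
$$
\underline{k}\le \prod_{p\mid k}p^2=\rad(k)^2,
$$
which gives $(\underline{k})^{1/2}\le \rad(k)$. The same inequality yields $\log(2\underline{k})\le \log(2\,\rad(k)^2)\le 2\log(2\,\rad(k))$, so that $(\log(2\underline{k}))^2\le 4(\log(2\,\rad(k)))^2$. For the last factor, we use $\underline{k}\le \rad(k)^2$ and that $\rad(k)\log(3|k|)\ge \log 3>1$. Together these give
$$
\log(\underline{k}\log(3|k|))\le \log\bigl(\rad(k)^2\log(3|k|)^2\bigr)=2\log(\rad(k)\log(3|k|)).
$$
This inequality is valid because the argument on the right exceeds $1$, so its logarithm is positive. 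The factor $\log(2|k|)$ is unchanged.

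Multiplying these bounds shows that the right-hand side of Theorem \ref{ThmMordellTruncated} is at most $16$ times the right-hand side of the present statement. Absorbing the factor $16$ into the effective absolute implicit constant finishes the proof. There is no real obstacle: the only care needed is in the final logarithmic factor, where we must check that its argument stays above $1$ so that the comparison of logarithms is legitimate.
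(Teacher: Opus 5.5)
Your proposal is correct and follows the paper's own argument: the paper deduces Theorem \ref{ThmMordellrad} from Theorem \ref{ThmMordellTruncated} by the single observation $(\underline{k})^{1/2}\le \rad(k)$, and your handling of the logarithmic factors via $\underline{k}\le \rad(k)^2$ and $\log(3|k|)\ge \log 3>1$ fills in the routine details correctly. (Your factors actually multiply to $4\cdot 2=8$ rather than $16$, which is harmless since only an upper bound is needed.)
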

\subsection{The height conjecture} Let us discuss an application of Theorem \ref{ThmMordellrad} in the context of elliptic curves and Szpiro's conjecture.

For an elliptic curve $E$ over $\Q$ let $\Delta_E$, $N_E$, and $h(E)$ be the absolute value of its minimal discriminant, its conductor, and its Faltings height relative to $\Q$ as normalized in \cite{Silverman}. One has the standard facts that 
$$
N_E\le \Delta_E \quad\mbox{ and }\quad \log \Delta_E \ll \max\{1,h(E)\},
$$ 
where the implicit constant is effective, see \cite{Silverman}. Szpiro \cite{Szpiro} conjectured that 
$$
\log \Delta_E \le c \log N_E
$$ 
for some absolute constant $c>0$, and it is well-known that this conjecture implies a form of the $abc$ conjecture. Frey \cite{Frey} proposed a stronger version of Szpiro's conjecture:
\begin{conjecture}[The height conjecture] There is an absolute constant $c>0$ such that for all elliptic curves $E$ over $\Q$ we have
$$
h(E) \le c \log N_E.
$$
\end{conjecture}
 This implies Szpiro's conjecture because $\log \Delta_E \ll \max\{1,h(E)\}$. The height conjecture has its advantages, as it is easier to connect with the theory of modular forms, see for instance \cite{MurtyPasten}. But even if one knows Szpiro's conjecture for a family of elliptic curves, it is not known how to deduce the height conjecture for that family. The issue is that $h(E)$ is a global Arakelov-theoretical invariant, $\log \Delta_E$ is its non-Archimedean part, and even under a good bound for $\log\Delta$ one would still need additional control on the Archimedean component of $h(E)$.

The best unconditional result towards the height conjecture available today is due to R. Murty and the author \cite{MurtyPasten}: we have the (effective) bound
\begin{equation}\label{EqnMP}
h(E) \ll N_E \log N_E.
\end{equation}

A notable case where Szpiro's conjecture is established but one does not know how to improve on the previous estimate for $h(E)$ is the case of elliptic curves with integral $j$-invariant. Here, Szpiro's conjecture in a sharp form was proved by Pesenti and Szpiro \cite{PesentiSzpiro}. Theorem \ref{ThmMordellrad} implies the following power-saving improvement on \eqref{EqnMP}:

\begin{theorem}[Bound for the Faltings height]\label{ThmHeightConj} For all elliptic curves $E$ over $\Q$ with integral $j$-invariant we have
$$
h(E) \ll N_E^{1/2} (\log N_E)^4
$$
where the implicit constant is absolute and effective.
\end{theorem}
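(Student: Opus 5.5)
We want to bound $h(E)$ for $E/\Q$ with $j(E)\in\Z$ by $N_E^{1/2}(\log N_E)^4$. The idea is to turn $E$ into an integral point on a Mordell curve whose parameter $k$ has radical controlled by $N_E$, and then apply Theorem \ref{ThmMordellrad}.

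\textbf{Step 1: a short Weierstrass model with controlled $k$.} Write $E$ in the form $y^2=x^3-27c_4x-54c_6$, where $c_4,c_6$ are the invariants of a minimal model. Then $c_4^3-c_6^2=1728\Delta$, and the primes dividing $\Delta$ (apart from $2,3$) are exactly the primes of bad reduction. Set
$$
X=c_4,\qquad Y=c_6,\qquad k=-1728\Delta .
$$
This gives an integer point $(X,Y)$ on $Y^2=X^3+k$, with $k\neq 0$ and
$$
\rad(k)\mid 6\,\rad(\Delta)\mid 6N_E .
$$
So $\rad(k)\le 6N_E$.

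\textbf{Step 2: bound $\log|k|$ polynomially in $\log N_E$.} The factor $\log(2|k|)$ in Theorem \ref{ThmMordellrad} must not destroy the estimate, so I need $\log\Delta_E\ll\log N_E$. This is where integrality of $j$ enters, since $j\in\Z$ means potentially good reduction everywhere. By the result of Pesenti--Szpiro quoted above (or directly from Ogg's formula plus the bound $v_p(\Delta)\le 12+v_p(\text{conductor exponent stuff})$ in the potentially good case), $v_p(\Delta_E)\ll 1$ for every $p$. Hence $\log\Delta_E\ll\log N_E$, since $\omega(N_E)\le\log N_E/\log 2$ and $N_E\ge 11$. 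Plugging into Theorem \ref{ThmMordellrad} gives
$$
\log\max\{|c_4|,|c_6|\}\ll N_E(\log N_E)^4 .
$$
This only recovers \eqref{EqnMP}, because the radical version loses a factor.

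\textbf{Step 3: use the truncated Theorem \ref{ThmMordellTruncated} instead.} Potentially good reduction at $p\ge5$ forces $v_p(\Delta)\ge2$ at every bad prime: additive reduction gives $v_p(N_E)=2$ and $v_p(\Delta)\ge 2$. Therefore
$$
\underline{k}\ll \prod_{p\mid N_E}p^{2}\le N_E\cdot(\text{bounded factor at }2,3),
$$
using $v_p(N_E)=2$ for $p\ge5$ with additive reduction. So $\underline{k}\ll N_E$. Theorem \ref{ThmMordellTruncated} then yields
$$
\log\max\{|c_4|,|c_6|\}\ll N_E^{1/2}(\log N_E)^2\log N_E\log(N_E\log N_E)\ll N_E^{1/2}(\log N_E)^4 .
$$

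\textbf{Step 4: from $c_4,c_6$ to $h(E)$.} Use the standard comparison between the Faltings height and the naive height,
$$
h(E)\le \tfrac{1}{12}\log\max\{|c_4|^3,|c_6|^2\}+O(\log(2+h_{\text{naive}})) ,
$$
as in Silverman's estimates \cite{Silverman}. The Archimedean contribution, $-\log|\Delta|^{1/12}\,\mathrm{Im}(\tau)^{1/2}|\eta(\tau)|^2$ up to constants, is bounded by $\log|j|$ plus $\log\log$ terms, and $\log|j|\le 3\log|c_4|$. Hence
$$
h(E)\ll \log\max\{|c_4|,|c_6|\}+\log\Delta+1\ll N_E^{1/2}(\log N_E)^4 .
$$

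\textbf{Expected main obstacle.} The delicate point is the local bookkeeping at $p=2,3$. There the minimal discriminant exponents are bounded (by Ogg, $v_p(\Delta)\le 12+3\cdot 8$ or similar in the potentially good case), so they contribute only an absolute constant to $\underline{k}$ and to $\log|k|$. The other point to be careful with is a clean reference giving $h(E)\ll \max\{1,\log|c_4|,\log|c_6|\}$.
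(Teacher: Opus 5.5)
Your proof is correct and follows essentially the paper's route: Pesenti--Szpiro gives $\log|k|\ll\log N_E$, the fact that every bad prime is additive gives the $N_E^{1/2}$ saving, you apply the Mordell bound, and Silverman's comparison converts this to $h(E)$. The only difference is that you apply Theorem \ref{ThmMordellTruncated} with $\underline{k}\ll N_E$, whereas the paper applies Theorem \ref{ThmMordellrad} with $\rad(k)\le 6\,\rad(\Delta_E)\ll N_E^{1/2}$. Your Step 2 pessimism about the radical version is therefore unfounded: additivity already gives $\rad(N_E)\le N_E^{1/2}$, so the radical version yields the same $N_E^{1/2}(\log N_E)^4$ bound.
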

\subsection{Thue--Mahler equations} Our main result is the following effective bound for cubic Thue--Mahler equations.

\begin{theorem}[Bound for cubic Thue--Mahler equations, version 1]\label{ThmThueMahler1} Let $F\in\Z[U,V]$ be an irreducible binary cubic form with discriminant of absolute value $D$ and let $H$ be the maximal absolute value of the coefficients of $F$. Let $K$ be the number field generated by a root of $F(U,1)$ and let $L$ be the splitting field of $F$. Let $p_1,...,p_r$ be distinct prime numbers (we allow $r=0$ in which case the list is empty), let $S$ be the set of places of $K$ consisting of the Archimedean ones and those above the $p_i$'s, and write $s=\#S$. Let $R_S$ be the $S$-regulator of $K$ and let $P_+$ be the maximal value of $\Norm(\Pfrak)$ as $\Pfrak$ varies over primes of $L$ dividing one of the $p_i$'s if $r>0$, and set $P_+=2$ if $r=0$.

 If $m\ne 0$ is an integer coprime to the $p_i$'s and $w\ne 0$ is an integer supported on the $p_i$'s, then the solutions $x,y\in\Z$ of $F(x,y)=wm$ with $\gcd(x,y)=1$ satisfy
$$
\log \max\{|x|,|y|\} \ll \log H +  (cs )^{2s}\frac{P_+}{\log P_+}R_S \log(2|m|D) \log(2+P_+R_S\log(2|m|D) )
$$
for a certain constant $c>0$, where $c$ and the implicit constant are absolute and effective.
\end{theorem}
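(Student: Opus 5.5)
Since $F$ is an irreducible cubic form, the plan is to reduce to a unit equation in the splitting field and apply linear forms in logarithms. The power saving should come from treating the prime-power part $w$ with $\mathfrak p$-adic estimates rather than by enlarging the set $S$ to include the primes of $m$. Write $F(U,V)=a\prod_{i=1}^3(U-\theta_iV)$ with $\theta_i$ the conjugate roots, and set $\beta_i=a(x-\theta_i y)\in L$. The norm of $\beta_1$ from $K$ to $\Q$ is $a^2wm$. Since $\gcd(x,y)=1$, the ideal $(\beta_1)$ factors as an ideal of norm bounded in terms of $|m|$ and the coefficients, times an $S$-ideal supported on the primes above the $p_i$.

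\textbf{Step 1 (reduction to an $S$-unit expression).} First fix a representative for the $m$-part. Using Minkowski-type bounds (with class-number-times-regulator controlled by $R_S$, the discriminant $D$ and $P_+$), I will write
\[
\beta_1=\gamma\,\varepsilon_1^{b_1}\cdots\varepsilon_{s-1}^{b_{s-1}},
\]
where the $\varepsilon_j$ form a fundamental system of $S$-units chosen as in Bugeaud--Győry. This choice gives $\prod_j h(\varepsilon_j)\ll (cs)^{s}R_S$. The element $\gamma$ lies in finitely many classes and has height $\log H+O(R_S+\log(|m|D))$; here the $p_i$-part contributes through $R_S$ and through the factor $P_+/\log P_+$.

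\textbf{Step 2 (Siegel identity).} Eliminating $x,y$ among the three conjugates gives Siegel's identity
\[
(\theta_2-\theta_3)\beta_1+(\theta_3-\theta_1)\beta_2+(\theta_1-\theta_2)\beta_3=0.
\]
Dividing through yields an equation $\lambda-1=\mu\,\eta$ with $\eta$ an $S$-unit ratio. Let $B=\max|b_j|$; then $\log\max\{|x|,|y|\}$ is bounded by $O(B\max_j h(\varepsilon_j))$ plus lower-order terms.

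\textbf{Step 3 (linear forms in logarithms).} At the place $v\in S$ where $\beta_1$ is smallest (archimedean or $\mathfrak p$-adic), the identity forces $|\Lambda|_v\le e^{-cB\cdot(\text{stuff})}$ for a linear form
\[
\Lambda=\sum_j b_j\log\varepsilon_j+\log(\text{fixed algebraic number}).
\]
Here "stuff" is the regulator-normalized size. I then apply Matveev's archimedean bound, and Yu's $\mathfrak p$-adic bound in the case $v\mid p_i$. In both cases the lower bound has the shape
\[
-\log|\Lambda|_v\ll (cs)^{2s}\,\frac{P_+}{\log P_+}\prod_j h(\varepsilon_j)\,h(\gamma')\log B.
\]
The factor $P_+/\log P_+$ is exactly the dependence in Yu's theorem coming from $\Norm(\Pfrak)$. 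Comparing the upper and lower bounds gives $B\ll(\cdots)\log B$, which solves to the stated bound with the factor $\log(2+P_+R_S\log(2|m|D))$.

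\textbf{Main obstacle.} The delicate point is in Steps 1 and 3. One must arrange that $m$ enters only through the height $\log(2|m|D)$ of a single non-unit term in the linear form, and not by enlarging $S$ or inflating the number of logarithms. One must also keep the product of unit heights as $R_S$ rather than $R_S^{\,2}$ or $R_S\cdot h_K$. My first attempt would be to apply Yu's theorem to $\Lambda$ with $\gamma'$ as the one "large height" logarithm, using the sharp dependence of Yu's bound on the single largest height. This requires a careful choice of the reduced basis $\{\varepsilon_j\}$ so that the height of $\gamma$ is $\ll R_S+\log(|m|D)$ up to $(cs)^s$ factors.
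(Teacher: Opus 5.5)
\paragraph{The gap: your scheme only gives $R_S^2$.} As set up, your argument is the classical one (Bugeaud--Gy\"ory). It yields a bound quadratic in $R_S$, not the linear dependence the statement requires.

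In Step~3, the Matveev/Yu lower bound is proportional to the product of the heights of \emph{all} the logarithms, i.e.\ to $h(\gamma')\prod_j h(\varepsilon_j)$. You have $\prod_j h(\varepsilon_j)\asymp_s R_S$. However, $\gamma$ has only been reduced modulo the $S$-unit lattice, so its height is controlled only up to the covering radius of that lattice and can genuinely be of size $R_S$. Already for $r=0$ and $K$ a complex cubic field there is one fundamental unit, of height $\asymp R$, and a typical $\gamma$ reduced modulo it still has height $\asymp R$.

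Your own target $h(\gamma)\ll \log H+R_S+\log(|m|D)$ therefore gives a product of order $R_S\bigl(R_S+\log H+\log(|m|D)\bigr)$. This is exactly the $R_S^2$ bound that the theorem improves. It also makes $\log H$ multiply $R_S$ instead of appearing additively. No cleverer choice of $\{\varepsilon_j\}$ repairs this. Refinements of Baker-type bounds that single out the largest height only improve the $\log B$ factor, not the product of heights. Note also that the saving does not come from the $\pfrak$-adic treatment of $w$; that part is Coates' standard argument.

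\paragraph{The missing idea: reduce units and the non-unit part together.} The idea, due to Okazaki, is to put the unit part and the non-unit part into one group and reduce them together. Let $\rho_i$ be the roots of $F(U,1)$ and $\beta_i=x-\rho_iy$. Pick $x',y'$ with $xy'-x'y=1$ and set $G(U,V)=F(xU+x'V,yU+y'V)$, whose leading coefficient is $wm$. Let $\theta_i$ be the roots of $G(U,1)$, put $P(Y)=(wm)^2G(Y/(wm),1)$, and set $\omega=P'(wm\theta_1)\in O_K$.

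Then $|\Norm(\omega)|=(wm)^2D$, with no dependence on $H$. Siegel's identity becomes $1+\eta=\frac{\rho_3-\rho_2}{\rho_3-\rho_1}\cdot\frac{\beta_1}{\beta_2}$ with $\eta=\sigma_2(\omega)/\sigma_3(\omega)$. Thus $\eta$ lies in the image of the rank-$s$ group $O_{K,S}^\times\langle\omega\rangle$ under $\xi\mapsto\sigma_2(\xi)/\sigma_3(\xi)$.

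Let $\tau_1,\dots,\tau_{s-1}$ be fundamental $S$-units and let $\lambda$ be the logarithmic embedding over $S$ and the primes dividing $\omega$. Then $\|\lambda(\tau_1)\wedge\cdots\wedge\lambda(\tau_{s-1})\wedge\lambda(\omega)\|_1=(s+1)R_S\log q'$, where $q'\mid m^2D$ is the part of $|\Norm(\omega)|$ coprime to $S$. This is a \emph{product} of $R_S$ and $\log q'$, not a sum.

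Akhtari--Vaaler then give generators $\xi_1,\dots,\xi_s$ of a subgroup of index at most $s!$ with $\prod_i h(\xi_i)\ll_s R_S\log q'$. These generators mix powers of $\omega$ with units. Your rigid decomposition $\beta_1=\gamma\cdot(\text{unit})$, with $\gamma$ to the first power, does not allow this.

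Now apply the Evertse--Gy\"ory form of the linear forms estimate to $(-\eta)^N$ with $N\le s!$; there the target's height enters only through $\log\max\{2,h(\eta)\}$. This gives, at every place, $-\log\|1+\eta\|_v\le(\kappa s)^{2s}\frac{\Nm(v)}{\log\Nm(v)}R_S\log(m^2D)\log(2+\log(XH))$. The $\pfrak$-adic case bounds $\log w$. The archimedean case, compared with $|1+\eta|\ll wmH^{c}X^{-3}$, yields the theorem with $\log H$ entering only additively.
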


As the statement is somewhat technical, let us give a consequence purely over $\Q$, not involving invariants from $K$ and $L$.

\begin{theorem}[Bound for cubic Thue--Mahler equations, version 2]\label{ThmThueMahler2} Let $F\in\Z[U,V]$ be an irreducible binary cubic form with discriminant of absolute value $D$ and let $H$ be the maximal absolute value of the coefficients of $F$. Let $p_1,...,p_r$ be distinct prime numbers ($r=0$ is allowed, in which case the list is empty) and let $Q$ be the largest of them when $r>0$, or $Q=2$ if $r=0$. Let $m\ne 0$ be an integer coprime to the $p_i$'s and $w\ne 0$ an integer supported on the $p_i$'s. Define
$$
\Theta = \prod_{i=1}^r \log p_i \quad \mbox{ (this is $1$ if $r=0$)}.
$$
Then all the solutions $x,y\in\Z$ of $F(x,y)=wm$ with $\gcd(x,y)=1$ satisfy
$$
\begin{aligned}
\log& \max\{|x|,|y|\} \\
&\ll \log H +  (c\cdot (r+1))^{6r}\Theta^3\frac{Q^3}{\log Q}(\underline{D})^{1/2}(\log(2\underline{D}))^2\log(2|m|D) \log(Q\underline{D}\Theta\log (3|m|D))
\end{aligned}
$$
for a certain constant $c>0$, where $c$ and the implicit constant are absolute and effective.
\end{theorem}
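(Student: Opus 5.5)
Theorem \ref{ThmThueMahler2} will follow from Theorem \ref{ThmThueMahler1} once we bound $s$, $P_+$ and $R_S$ in terms of $r$, $Q$, $\Theta$ and $D$. The plan is to bound each invariant in turn and then substitute.

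\emph{The number of places and $P_+$.} Since $[K:\Q]=3$, each $p_i$ has at most $3$ primes of $K$ above it, and $K$ has at most $3$ Archimedean places. Hence $s\le 3r+3=3(r+1)$, and so
$$
(cs)^{2s}\le (3c(r+1))^{6(r+1)} \le (c'(r+1))^{6r}
$$
after enlarging the constant (the extra factor $(c(r+1))^6$ can be absorbed). Since $[L:\Q]\le 6$, every prime $\Pfrak$ of $L$ above $p_i$ has residue degree $f\le 3$. Indeed, the Galois group is $S_3$ or $A_3$, whose cyclic subgroups have order at most $3$, and $f$ is the order of a Frobenius element, i.e.\ of a cyclic decomposition-group quotient. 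Thus $P_+\le Q^3$, and $P_+/\log P_+\ll Q^3/\log Q$ because $t/\log t$ is increasing for $t\ge 3$. When $r=0$ the same bound holds trivially.

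\emph{The $S$-regulator.} The standard formula is
$$
R_S = h_K R_K \prod_{i}\ \frac{\log \Norm(\text{prime ideals})\text{-type factor}}{[\text{order of prime classes}]},
$$
i.e.\ $R_S\le h_KR_K\prod_{\pfrak\in S_f}\log\Norm(\pfrak)$, where the product runs over all but suitably many of the finite primes in $S$ (the usual determinant expansion). For each $p_i$, the primes above it have $\sum_{\pfrak|p_i} f_\pfrak\le 3$, so $\prod_{\pfrak|p_i}\log\Norm\pfrak\le (3\log p_i)^3$. This yields
$$
R_S\le h_KR_K\cdot 27^r\Theta^3,
$$
and the $27^r$ is absorbed into $(c(r+1))^{6r}$. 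Next I would invoke the Landau/Siegel-type bound $h_KR_K\ll |D_K|^{1/2}(\log|D_K|)^{2}$ for cubic fields, with an effective constant (the residue of the Dedekind zeta function at $1$ is $\ll(\log|D_K|)^{2}$). The key remaining point is to replace $|D_K|$ by $\underline{D}$. Since $D=\mathrm{disc}(F)=[\mathcal O_K:\Z[\theta]]^2D_K$ (up to the index of the order attached to $F$), we have $D_K\mid D$. Moreover, for a cubic field, $v_p(D_K)\le 2$ for $p\ge 5$, while $v_2(D_K)\le 3$ and $v_3(D_K)\le 5$. Hence $|D_K|\ll \underline{D}$ up to bounded factors at $2$ and $3$, so $h_KR_K\ll \underline{D}^{1/2}(\log 2\underline{D})^2$. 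I expect this to be the main obstacle: one must be careful that the order $\Z[\theta]$ need not be maximal, and that the primes $2$ and $3$ contribute only absolute constants.

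\emph{Substitution.} Finally, we substitute into Theorem \ref{ThmThueMahler1}. For the logarithmic factor, we have $P_+R_S\log(2|m|D)\le Q^3(c(r+1))^{6r}\Theta^3\underline{D}\log(3|m|D)$. Taking logarithms, the $r\log(r+1)$ term is $\ll$ the $\log$ of the product of the primes, which is $\le rQ$-ish. Therefore the whole logarithm is $\ll \log(Q\underline D\Theta\log(3|m|D))$ after adjusting constants; if needed, the stray $r\log r$ can be absorbed into the prefactor $(c(r+1))^{6r}$. Combining these estimates gives the stated bound.
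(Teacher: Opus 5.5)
Your proposal is correct and follows the paper's own deduction step for step: Lemma \ref{LemmaRS} with $s\le 3r+3$, $P_+\le Q^3$ and $\prod_{\pfrak\in S_0}\log\Norm(\pfrak)\le 27^r\Theta^3$, then Landau's bound combined with $D_K\mid D$ and the local discriminant-exponent bound to pass from $D_K$ to $\underline{D}$. Three small fixes are needed, none affecting the conclusion: the exact regulator identity is $h_SR_S=h_KR_K\prod_{\pfrak\in S_0}\log\Norm(\pfrak)$ (with $h_S$ the $S$-class number), which gives the inequality you use; $D_K\mid D$ comes from the cubic ring attached to $F$, not from $\Z[\theta]$, which is not an order when $F$ is not monic; and in the final logarithm the $r$-dependence is not controlled by $\log\prod p_i$, since the target contains $\log\Theta=\sum_i\log\log p_i$, though your fallback of absorbing it into the prefactor $(c(r+1))^{6r}$ is valid.
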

As it might be useful in some applications, let us give a weaker consequence with a bound of a much simpler shape.
\begin{corollary}[A simpler Thue--Mahler bound]\label{CoroThueMahler} Let $F\in\Z[U,V]$ be an irreducible binary cubic form with coefficients of absolute value at most $H$ and let $D$ be the absolute value of its discriminant. For all $x,y$ coprime integers and for every $\epsilon>0$, we have
$$
\log\max\{|x|,|y|\}\ll_\epsilon \log H + e^{(6+\epsilon)Q(x,y)}(\underline{D})^{1/2}(\log(2\underline{D}))^2(\log(2D))^2
$$
where $Q(x,y)$ is the largest prime factor of $F(x,y)$ (or defined as $2$ if $F(x,y)=\pm 1$), and the implicit constant is effective and only depends on $\epsilon$. In particular,
$$
\log\max\{|x|,|y|\}\ll_\epsilon \log H + e^{(6+\epsilon)Q(x,y)}D^{1/2}(\log(2D))^4.
$$
\end{corollary}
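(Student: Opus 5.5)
We deduce Corollary \ref{CoroThueMahler} from Theorem \ref{ThmThueMahler2} by an appropriate choice of the data $p_1,\dots,p_r$, $w$, $m$. Fix coprime integers $x,y$ and put $n=F(x,y)$. Since $F$ is irreducible of degree $3$, $F$ has no rational linear factor, so $n\ne 0$. Let $Q_0=Q(x,y)$. We take $p_1,\dots,p_r$ to be \emph{all} primes $p\le Q_0$ (if $n=\pm1$, either $r=0$ or just $p_1=2$), $w$ the part of $n$ supported on these primes, and $m=\pm1$. Then $m$ is coprime to the $p_i$, $w$ is supported on them, and $F(x,y)=wm$ with $\gcd(x,y)=1$. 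So Theorem \ref{ThmThueMahler2} applies with $|m|=1$ and $Q\le \max\{2,Q_0\}$. Using all primes up to $Q_0$, rather than only the primes dividing $n$, makes every quantity depend on $Q_0$ alone.

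Next we bound the arithmetic factors in terms of $Q_0$. We have $r=\pi(Q_0)\ll Q_0/\log Q_0$, so
$$
(c(r+1))^{6r}=\exp\bigl(6r\log(c(r+1))\bigr)=\exp\bigl(6\pi(Q_0)(\log Q_0+O(1)-\log\log Q_0)\bigr).
$$
By Chebyshev-type bounds, namely $\pi(t)\log t\le (1+o(1))t$ or equivalently via $\theta(t)\sim t$, this is at most $e^{(6+\epsilon/2)Q_0}$ for $Q_0$ large in terms of $\epsilon$. Further,
$$
\Theta=\prod_{p\le Q_0}\log p\le (\log Q_0)^{\pi(Q_0)}=\exp\bigl(\pi(Q_0)\log\log Q_0\bigr)=e^{o(Q_0)},
$$
so $\Theta^3$ and $Q^3/\log Q$ are also $e^{o(Q_0)}$. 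For the last logarithm we use $\log(Q\underline{D}\Theta\log(3D))\ll \log Q_0+\log(2D)+o(Q_0)$. The $O(Q_0)$ part is absorbed into $e^{(\epsilon/2)Q_0}$, and what remains is at most a multiple of $\log(2D)$. Combining this with the factor $\log(2|m|D)=\log(2D)$ produces $(\log(2D))^2$. Small values of $Q_0$ contribute only constants depending on $\epsilon$. This yields the first display.

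For the second display, we use the definition of $\underline{D}$: if $p\mid D$ then $p^{\min\{2,v_p(D)\}}$ divides $D$, so $\underline{D}\le D$. Hence $(\underline{D})^{1/2}(\log 2\underline{D})^2\le D^{1/2}(\log 2D)^2$, which gives the simpler bound.

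The main obstacle is the bookkeeping in the exponent, so that the constant $6+\epsilon$ comes out precisely. The delicate point is $(c(r+1))^{6r}$ with $r=\pi(Q_0)$. We need the prime number theorem, or at least $\pi(t)\log t\le(1+o(1))t$, to make sure that $6r\log r$ is $(6+o(1))Q_0$ and not a larger multiple of $Q_0$. We also need to check that every other factor, including the $\log Q$ and $\Theta$ inside the final logarithm, is genuinely $e^{o(Q_0)}$ times powers of $\log(2D)$.
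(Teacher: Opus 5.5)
Your proposal is correct and follows essentially the paper's route: apply Theorem \ref{ThmThueMahler2} with $|m|=1$, use the prime number theorem to get $(c(r+1))^{6r}\ll_\epsilon e^{(6+\epsilon)Q}$, use $\Theta\le(\log Q)^{r}=e^{o(Q)}$, and absorb the remaining lower-order factors. The only cosmetic difference is that you take all primes up to $Q(x,y)$ (so $r=\pi(Q)$), whereas the paper takes only the prime divisors of $F(x,y)$ (so $r\le\pi(Q)$ and $P_r\le Q$); both choices give the same final bound.
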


The key aspect of these bounds is the dependence on $R_S$ and $\underline{D}$. Previous works (see for instance  Theorem 3 in \cite{BugeaudGyory}) had a term $R_S^2$, as opposed to our bound in Theorem \ref{ThmThueMahler1}. Upon using classical regulator-discriminant bounds of Landau \cite{Landau}, one gets a term $(\underline{D})^{1+o(1)}$ rather than our $(\underline{D})^{1/2+o(1)}$ in Theorem \ref{ThmThueMahler2} and Corollary \ref{CoroThueMahler}.

\subsection{About the proofs} That Theorem \ref{ThmMordell} follows from Theorem \ref{ThmThueMahler2} is completely classical and goes back to the work of Baker \cite{Baker} and Stark \cite{Stark} (actually, we go from Theorem \ref{ThmThueMahler2} to Theorem \ref{ThmMordellTruncated}). For the convenience of the reader we recall the argument in Section \ref{SecThueToMordell}, combining the classical expositions by Baker \cite{Baker} and Stark \cite{Stark} with the modern presentation of Bennett--Ghadermarzi \cite{BennettGhadermarzi}.

The approximation argument via linear forms in logarithms for the Thue equation is, for the most part, classical. There is however a crucial difference that allows us to lower Stark's exponent $1+\epsilon$ to $1/2+\epsilon$, as we now explain.

In the classical analysis of the cubic Thue equation (see for instance \cite{Juricevic} for a particularly clean exposition) the approximating element takes the form $\eta=u\cdot \alpha$ where $u$ comes from the unit group of a certain number field $K$, and $\alpha$ is an additional algebraic element. Then $u$ and $\alpha$ are controlled \emph{separately} using geometry of numbers on generators for $O_K^\times$, which contributes with $R^2$ where $R$ is the regulator of $K$. In our setting, however, we use ideas of Okazaki \cite{Okazaki} to introduce the group $O^\times_K\langle \omega\rangle$ for a suitable $\omega$, and then we apply geometry of numbers \emph{once} on this structure (cf. Lemma \ref{LemmaReg}), so we only get $R$ once. Actually, we work with a group of the form $O^\times_{K,S}\langle \omega\rangle$ to include the Mahler aspect (primes with unspecified  powers) of the Thue--Mahler equations. 

Okazaki \cite{Okazaki}  focuses on bounding the \emph{number} of solutions of $F(U,V)=1$ for a homogeneous cubic $F$ but, as outlined above, those ideas are very efficient for bounding the \emph{size} of the solutions too. 

The use of AI in this work is described in the Acknowledgments section.

\subsection{Notation} From now on, all implicit constants in Vinogradov's notation $\ll$  are effective and any dependence on parameters will be indicated by subscripts. If $f$ is an integer polynomial, the maximal absolute value of its coefficients is denoted by $H(f)$. If $K$ is a number field and $v$ is a place of it, the associated norm is $\|-\|_v$. These norms are powers of the corresponding absolute values with exponents equal to the local degrees (not normalized to $\Q$) and they satisfy the product formula $\prod_v\|x\|_v=1$ for $x\in K^\times$. For an algebraic number $\alpha\in K$ its (logarithmic) height is 
$$
h(\alpha)=\frac{1}{[K:\Q]} \sum_v \log \max\{1,\|\alpha\|_v\}.
$$
At some point we will use the $L^1$-norm $\|-\|_1$ on real vector spaces. This should not be confused with the $\|-\|_v$ introduced above.

\section{Preliminaries}

\subsection{Number field bounds}

From (1.8.4) in \cite{EvertseGyory} we have the next bound, which is useful when one applies Theorem \ref{ThmThueMahler1}.
\begin{lemma}[$S$-regulator bound]\label{LemmaRS} Let $K$ be a number field, $S$ a finite set of places containing the Archimedean ones, and $S_0$ the set of non-Archimedean places of $S$. Let $\hbf$ and $R$ be the class number and regulator of $K$. Then the $S$-regulator $R_S$ is bounded as follows:
$$
R_S \le \hbf R\prod_{\pfrak\in S_0}\log \Norm(\pfrak).
$$
\end{lemma}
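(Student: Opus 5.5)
The plan is to prove the inequality $R_S\le \hbf R\prod_{\pfrak\in S_0}\log\Norm(\pfrak)$ by comparing the $S$-unit lattice with the lattice obtained by adjoining, to the ordinary units, a generator of a suitable power of each prime in $S_0$. Write $S_0=\{\pfrak_1,\dots,\pfrak_t\}$ and let $r$ be the unit rank of $K$. For each $i$, the order $h_i$ of the class of $\pfrak_i$ in the class group divides $\hbf$. Let $\pi_i\in O_K$ be a generator of $\pfrak_i^{h_i}$. Then $O_K^\times$ together with $\pi_1,\dots,\pi_t$ generates a subgroup $\Gamma\subseteq O_{K,S}^\times$ of full rank $r+t$.

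The first step is to compute the regulator of $\Gamma$. Use the $S$-logarithmic embedding $\ell(x)=(\log\|x\|_v)_{v\in S}$ and delete one Archimedean coordinate. In this embedding a unit $u\in O_K^\times$ has $\log\|u\|_{\pfrak_i}=0$ for every $i$. Also $\pi_i$ has $\log\|\pi_i\|_{\pfrak_j}=-\delta_{ij}h_i\log\Norm(\pfrak_i)$. So, after ordering the coordinates as (Archimedean, then $S_0$) and the generators as (fundamental units, then $\pi_i$), the matrix is block triangular. The unit block has determinant $\pm R$, and the $S_0$ block is diagonal with entries $h_i\log\Norm(\pfrak_i)$. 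Hence
$$
\mathrm{Reg}(\Gamma)=R\prod_{i=1}^t h_i\log\Norm(\pfrak_i).
$$

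The second step is to relate $\mathrm{Reg}(\Gamma)$ to $R_S$. Torsion has the same image, so modulo torsion we have $R_S\cdot[O_{K,S}^\times:\Gamma]=\mathrm{Reg}(\Gamma)$. It therefore suffices to show
$$
[O_{K,S}^\times:\Gamma]\ \ge\ \prod_{i=1}^t h_i\Big/\hbf .
$$
Combined with the formula for $\mathrm{Reg}(\Gamma)$, this gives exactly the claimed inequality $R_S\le\hbf R\prod\log\Norm(\pfrak_i)$.

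To prove the index bound, consider the valuation map $O_{K,S}^\times\to\Z^t$, $x\mapsto(v_{\pfrak_i}(x))_i$. Its kernel is $O_K^\times$, which is contained in $\Gamma$, so the index equals $[M:\bigoplus h_i\Z]$, where $M$ is the image of the valuation map. Now $M$ is the kernel of the map $\Z^t\to \mathrm{Cl}(K)$ sending $(a_i)\mapsto[\prod\pfrak_i^{a_i}]$. Its image is a subgroup of $\mathrm{Cl}(K)$, so $[\Z^t:M]\le\hbf$. Since $[\Z^t:\bigoplus h_i\Z]=\prod h_i$, we get
$$
[M:\textstyle\bigoplus h_i\Z]=\dfrac{\prod h_i}{[\Z^t:M]}\ge \dfrac{\prod h_i}{\hbf},
$$
which is the required inequality.

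The only mildly delicate step is this index bookkeeping through the valuation map and the class group. The rest is just the determinant of a block triangular matrix. If one prefers, the statement can simply be cited as (1.8.4) of \cite{EvertseGyory}, as the paper indicates.
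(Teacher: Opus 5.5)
Your proof is correct. The paper gives no argument for this lemma; it only cites (1.8.4) of Evertse--Gy\"ory. So your proposal is a self-contained replacement for that citation, and it is essentially the standard derivation behind it.

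The block-triangular determinant gives $\mathrm{Reg}(\Gamma)=R\prod_i h_i\log\Norm(\pfrak_i)$. Your valuation-map bookkeeping actually computes the index exactly, not just a lower bound: $[O_{K,S}^\times:\Gamma]=\prod_i h_i/[\Z^t:M]$, where $[\Z^t:M]=\#\langle[\pfrak_1],\dots,[\pfrak_t]\rangle$. Hence you obtain the identity
\[
R_S = R\cdot \#\langle[\pfrak_1],\dots,[\pfrak_t]\rangle\cdot\prod_{i=1}^t\log\Norm(\pfrak_i),
\]
that is, $\hbf_S R_S=\hbf R\prod_{\pfrak\in S_0}\log\Norm(\pfrak)$, with $\hbf_S$ the $S$-class number. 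The stated inequality is then just $\hbf_S\ge 1$.

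Compared with the citation, your version shows exactly where the loss occurs, namely the factor $\hbf_S$. This also explains the paper's remark that for $S_0=\emptyset$ the lemma gives only $\hbf R$ while $R_S=R$. The citation buys only brevity.

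The points you use implicitly are all sound:
\begin{itemize}
\item $R_S$ does not depend on which place of $S$ is deleted, by the product formula. So deleting an Archimedean coordinate is legitimate.
\item $\mu_K\subseteq O_K^\times\subseteq\Gamma$, so the index modulo torsion equals $[O_{K,S}^\times:\Gamma]$.
\item The paper's normalization $\|x\|_\pfrak=\Norm(\pfrak)^{-v_\pfrak(x)}$ is what makes the diagonal entries exactly $h_i\log\Norm(\pfrak_i)$.
\end{itemize}
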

We remark that when $S_0=\emptyset$ the previous bound gives $R_S\le \hbf R$ while we actually have $R_S=R$. So, in general it is better to keep $R_S$ in our bounds.

The next result, due to Landau, is classical and can be found in \cite{Landau}.

\begin{lemma}[Landau]\label{LemmaLandau}
Let $K$ be a degree $d>1$ number field with absolute value of discriminant $D_K$, class number $\hbf$,  and regulator $R$. Then
$$
1\ll_d \hbf R \ll_d D_K^{1/2}(\log D_K)^{d-1}.
$$
\end{lemma}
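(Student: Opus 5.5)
Landau's lemma is classical, so I would reproduce the standard analytic argument rather than anything specific to this paper. The plan is to relate $\hbf R$ to the residue of the Dedekind zeta function and bound that residue from above and below.

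\textbf{Step 1: reduce to the residue.} By the analytic class number formula,
$$
\kappa_K := \mathrm{Res}_{s=1}\zeta_K(s) = \frac{2^{r_1}(2\pi)^{r_2}\hbf R}{w_K D_K^{1/2}}.
$$
Here $r_1+2r_2=d$, and the number of roots of unity $w_K$ is bounded in terms of $d$. Hence
$$
\hbf R \asymp_d D_K^{1/2}\kappa_K,
$$
and the upper bound reduces to showing $\kappa_K\ll_d (\log D_K)^{d-1}$.

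\textbf{Step 2: the upper bound.} I would use the standard convexity argument. Write $\zeta_K(s)=\sum a_n n^{-s}$. The coefficient $a_n$ counts ideals of norm $n$, so $a_n\le \tau_d(n)$, the $d$-fold divisor function. Hence $\zeta_K(\sigma)\le\zeta(\sigma)^d$ for real $\sigma>1$.

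To extract the residue, consider
$$
(s-1)\zeta_K(s)\Gamma_K(s)D_K^{s/2},
$$
with $\Gamma_K$ the archimedean gamma factors. By the functional equation this completed function is entire of order one. Apply Phragm\'en--Lindel\"of on the strip $1-\delta\le \mathrm{Re}\,s\le 1+\delta$ with $\delta=1/\log D_K$. On the right line, $|\zeta_K|\le\zeta(1+\delta)^d\ll\delta^{-d}$. On the left line, the functional equation gives a bound of $D_K^{\delta}\delta^{-d}\ll\delta^{-d}$. This yields $\kappa_K\ll_d(\log D_K)^{d}$.

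To save one logarithm and reach exponent $d-1$, I would factor $\zeta_K=\zeta\cdot L$, where $L=\zeta_K/\zeta$ is entire by Aramata--Brauer. Then $\kappa_K=L(1)$. Bounding $L(1)$ by the same convexity argument, the entire function $L$ has degree $d-1$ and conductor $D_K$, which gives $L(1)\ll_d(\log D_K)^{d-1}$.

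I expect this step to be the main technical obstacle. The delicate points are controlling the Dirichlet coefficients of $\zeta_K/\zeta$, which are bounded by $\tau_{d-1}$ only in a Dirichlet-convolution sense, and handling the gamma factors uniformly. An alternative is to use an approximate functional equation for $L(1)$, truncated at $D_K^{1/2+\epsilon}$, which gives a sum of coefficients of size $\sum_{n\le x}\tau_{d-1}(n)/n\ll(\log x)^{d-1}$.

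\textbf{Step 3: the lower bound.} For $1\ll_d\hbf R$, I would use that $\hbf\ge1$ and that the regulator is bounded below absolutely: Friedman's theorem gives $R\ge 0.2$ when $d>1$. A cheaper route is Northcott together with a Dobrowolski- or Blanksby--Montgomery-type lower bound for the height of a non-torsion unit, combined with Hadamard's inequality. Since $d>1$, either the unit rank is positive, or $K$ is imaginary quadratic with $R=1$. In the first case, $R$ is bounded below by a product of heights of successive minima of the unit lattice, each $\gg_d 1$.
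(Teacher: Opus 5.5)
The paper does not prove this lemma; it only cites Landau. Your analytic strategy is the right kind of argument, but as written it does not establish the exponent $d-1$, and the preliminary convexity step is set up incorrectly.

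\textbf{The convexity step.} On the strip $1-\delta\le \mathrm{Re}\,s\le 1+\delta$, the functional equation $\Lambda_K(s)=\Lambda_K(1-s)$ sends the left edge to $\mathrm{Re}\,s=\delta$, not to $\mathrm{Re}\,s=1+\delta$. So the Dirichlet series gives you nothing on that edge. You need the symmetric strip $-\delta\le \mathrm{Re}\,s\le 1+\delta$ and the function $s(s-1)\Lambda_K(s)$; note that $(s-1)\Lambda_K(s)$ is not entire, since $\Lambda_K$ also has a pole at $s=0$. With that repair you get $\kappa_K\ll_d(\log D_K)^d$. A logarithm is lost because away from $t=0$ the factor $|s(s-1)|$ is no longer of size $\delta$, while you only have the trivial bound $|\zeta_K(1+\delta+it)|\le \zeta(1+\delta)^d$.

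\textbf{The missing step.} In the refinement via $L=\zeta_K/\zeta$, the step you flag as the obstacle is exactly what is missing. You need $|L(1+\delta+it)|\ll_d \delta^{-(d-1)}$, and going through $1/\zeta(s)=\sum\mu(n)n^{-s}$ only gives $\delta^{-(d+1)}$. The bound can be supplied elementarily. Factor each $1-X^{f_{\pfrak}}$ over roots of unity in the Euler factor $(1-p^{-s})\prod_{\pfrak\mid p}(1-p^{-f_{\pfrak}s})^{-1}$ and cancel one factor $1-p^{-s}$. What remains is a product of at most $\sum_{\pfrak\mid p}f_{\pfrak}-1\le d-1$ factors $(1-\alpha p^{-s})^{-1}$ with $|\alpha|=1$. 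Hence $|b_n|\le \tau_{d-1}(n)$ pointwise, not merely in a convolution sense. Now apply the maximum principle on the symmetric strip to the entire function $\Lambda_K/\Lambda_\Q$. It is entire by Aramata--Brauer, satisfies $\Lambda(s)=\Lambda(1-s)$ (which controls the left edge), and its gamma ratio is bounded on $\mathrm{Re}\,s=1+\delta$. This gives $\kappa_K=L(1)\ll_d D_K^{\delta/2}\zeta(1+\delta)^{d-1}\ll_d(\log D_K)^{d-1}$.

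\textbf{A shorter route.} The standard proof needs neither Aramata--Brauer nor vertical lines. Hecke's integral representation gives $\Lambda_K(s)=\lambda/(s(s-1))+I(s)$, where $\lambda=\mathrm{Res}_{s=1}\Lambda_K=D_K^{1/2}\Gamma_K(1)\kappa_K$. Here $I(s)$ is an integral over $t\ge 1$ of positive theta-series tails, so $I(s)>0$ for real $s$. Hence for real $s>1$ you have $\kappa_K\le s(s-1)D_K^{(s-1)/2}\Gamma_K(s)\Gamma_K(1)^{-1}\zeta(s)^d$. At $s=1+1/\log D_K$ the factor $s-1$ cancels one power of $\zeta(s)^d\le (s/(s-1))^d$, giving $\kappa_K\ll_d(\log D_K)^{d-1}$ directly. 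Staying on the real axis is what avoids the loss above.

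\textbf{The lower bound.} Your lower bound via Friedman is fine. In the alternative you sketch, the tool that turns successive minima into a lower bound for the covolume of the unit lattice is Minkowski's second theorem. Hadamard's inequality bounds determinants from above, so it cannot do this.
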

We remark that a sharper explicit bound can be found in (1.5.2) of \cite{EvertseGyory}.

The next result is the reason why $\underline{D}$ appears in Theorem \ref{ThmThueMahler2}.
\begin{lemma}\label{LemmaHensel} Let $K$ be a cubic number field with absolute value of discriminant $D_K$ and let $D$ be a non-zero integer with $D_K | D$. Then
$$
D_K \ll \underline{D}.
$$
\end{lemma}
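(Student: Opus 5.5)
The plan is to bound $D_K$ one prime at a time. Write $D_K=\prod_p p^{v_p(D_K)}$. For every prime $p$ we have $v_p(D_K)\le v_p(D)$, because $D_K\mid D$. So if $v_p(D_K)\le 2$, then $p^{v_p(D_K)}\le p^{\min\{2,v_p(D)\}}$, which is exactly the $p$-part of $\underline{D}$. The only primes needing attention are those with $v_p(D_K)\ge 3$. For these I will show, from local theory of cubic fields, that $v_p(D_K)$ is bounded by an absolute constant and that this can only happen for $p\in\{2,3\}$. Granting that, the contribution of such primes to $D_K$ exceeds their contribution to $\underline{D}$ by at most a bounded factor. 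Such a prime $p$ divides $D$, so its $p$-part of $\underline{D}$ is at least $p$. Multiplying over the primes then gives $D_K\le C\,\underline{D}$ with an absolute constant $C$ (for instance $C=2^{3}\cdot 3^{5}/(2\cdot 3)$, or anything cruder).

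The key step is the local fact. Write $D_K=\prod_p \mathrm{Norm}(\mathfrak{d}_{K_\mathfrak{p}/\Q_p})$, where $\mathfrak p$ runs over the primes of $K$ above $p$. Since $[K:\Q]=3$, the étale algebra $K\otimes\Q_p$ is a product of local fields of total degree $3$. If $p$ is unramified in $K$, then $v_p(D_K)=0$. If $p$ is tamely ramified, Dedekind's formula gives $v_p(\mathfrak{d})=\sum (e_{\mathfrak p}-1)f_{\mathfrak p}\le 2$. This covers all $p\ge 5$, since ramification index at most $3$ is then coprime to $p$. Wild ramification can occur only when $p\mid e_{\mathfrak p}$ with $e_{\mathfrak p}\le 3$, that is, for $p=2$ with $e=2$ or $p=3$ with $e=3$. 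In these cases the different exponent satisfies $d_{\mathfrak p}\le e_{\mathfrak p}-1+e_{\mathfrak p}v_{\mathfrak p}(e_{\mathfrak p})$ (Hensel's bound, presumably the source of the lemma's label). This gives $v_2(D_K)\le 3$ and $v_3(D_K)\le 5$. These match the known maximal discriminant exponents for cubic fields, namely $2^3$ and $3^5$.

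The remaining steps are routine bookkeeping. At $p=2$: $v_2(D_K)\le 3$, and since $2\mid D$, the $2$-part of $\underline{D}$ is at least $2$, so the ratio is at most $2^{2}$. At $p=3$: $v_3(D_K)\le 5$ and the $3$-part of $\underline{D}$ is at least $3$, so the ratio is at most $3^{4}$. For $p\ge5$ the ratio is at most $1$. Hence $D_K\le 2^2\cdot 3^4\,\underline{D}\ll \underline{D}$.

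I expect the only real obstacle to be stating Hensel's different bound correctly in the wildly ramified cases. Specifically, the exponent of the different for a totally ramified extension of degree $e$ is at most $e-1+v_{\mathfrak p}(e)$, where $v_{\mathfrak p}(e)=e\,v_p(e)$ is measured in the normalized valuation of the extension. Everything else is elementary.
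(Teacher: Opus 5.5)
Your proposal is correct and follows essentially the same route as the paper. The paper also works prime by prime, citing Mahler's discriminant-exponent bound $v_p(D_K)\le 2+3\lfloor \log 3/\log p\rfloor$, whose local input (Serre, p.~58) is exactly the different bound $e-1+v_{\mathfrak{p}}(e)$ you invoke, and then compares with $\underline{D}$ via $D_K\mid D$. Your splitting-type analysis is slightly sharper at $p=2$ (exponent at most $3$ rather than the paper's $5$), but this only changes the absolute constant.
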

\begin{proof} Let $K'$ be a degree $n$ number field and $p$ a prime dividing $D_{K'}$. Mahler's discriminant-exponent bound (see p.58 in \cite{Serre} for the local statement) gives 
$$
v_p(D_{K'}) \le n-1 + n\left\lfloor\frac{\log n}{\log p}\right\rfloor,
$$
see for instance Corollary 2.5 in \cite{MantillaSoler}. Applied to $K'=K$ and $n=3$ we get the result.
\end{proof}

\subsection{Linear forms in logarithms} We will need an input from the theory of linear forms in logarithms. The following result is a special case of Theorem 4.2.1 in Evertse and Gy\"ory \cite{EvertseGyory} with  the choice $\alpha=1$. It follows in fact from the theory of linear forms in (complex and $p$-adic) logarithms in combination with a strong new result from \cite{EvertseGyory} in geometry of numbers. To state it, for a number field $L$ and a place $v$ of it, we let $\Nm(v)=\Norm(\pfrak)$ if $v$ is associated to a prime ideal $\pfrak$, and $\Nm(v)=2$ if $v$ is Archimedean.

\begin{theorem}[Approximation in multiplicative groups]\label{ThmLFL} Let $L$ be a number field of degree $\delta$ and let $v$ be a place of it. Let $\Gamma$ be a finitely generated multiplicative subgroup of $L^\times$ and let $\gamma_1,...,\gamma_s\in \Gamma$ be a system of generators for $\Gamma/\Gamma_{\rm tor}$, with $s\ge 1$ (here, $\Gamma_{\rm tor}$ is the torsion part of $\Gamma$). There is an effective constant $c(\delta)>0$ depending only on $\delta$ such that for every $\gamma\in\Gamma$ different from $1$ we have
$$
-\log\|1-\gamma\|_v \le c(\delta)^{s} \frac{\Nm(v)}{\log \Nm(v)} \left(\log \max\{2,h(\gamma)\}\right)\prod_{i=1}^s \max\{1,h(\gamma_i)\}.
$$
\end{theorem}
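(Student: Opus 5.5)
The plan is to deduce the statement from a Baker-type lower bound for linear forms in logarithms: Matveev's theorem when $v$ is Archimedean, and Yu's theorem when $v$ is non-Archimedean. The work lies in preparing $\gamma$ so that only $\log h(\gamma)$ enters, and not the size of its exponents with respect to a bad basis. If $\|1-\gamma\|_v\ge 1/2$ (after normalising the local degree) there is nothing to prove, so I assume $\gamma$ is $v$-adically close to $1$.

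\emph{Step 1: choose a good basis.} I would replace $\gamma_1,\dots,\gamma_s$ by a better basis $\eta_1,\dots,\eta_s$ of $\Gamma/\Gamma_{\rm tor}$, using geometry of numbers in the logarithmic embedding of $\Gamma/\Gamma_{\rm tor}$ into $\R^{\#\{\text{places}\}}$ with the $L^1$-type height norm. By Minkowski's second theorem this embedding is a lattice of rank $s$. The basis should satisfy
$$\prod_i \max\{1,h(\eta_i)\}\le c_1(\delta)^s\prod_i \max\{1,h(\gamma_i)\}.$$
This is the ``strong new result in geometry of numbers'' from \cite{EvertseGyory}: a quasi-orthogonal basis whose height product is controlled by the covolume. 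The covolume is in turn at most $c^s\prod h(\gamma_i)$ by Hadamard's inequality.

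Write $\gamma=\zeta\prod\eta_i^{b_i}$ with $\zeta$ a root of unity. Quasi-orthogonality, combined with a Dobrowolski/Voutier lower bound $h(\eta)\gg_\delta 1$ for non-torsion $\eta$, should give
$$|b_i|\,h(\eta_i)\le c_2(\delta)^s\, h(\gamma)\quad\text{for every } i.$$

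\emph{Step 2: apply linear forms in logarithms.} Apply Matveev (respectively Yu) to the form $\Lambda = b_0\log\zeta+\sum b_i\log\eta_i$ at $v$. In the $p$-adic case one first passes to a power to land in the domain of convergence of the logarithm; this costs a factor of size $\Nm(v)$, which is the source of $\Nm(v)/\log\Nm(v)$. The resulting bound has the shape
$$-\log\|1-\gamma\|_v\le c_3(\delta)^{s}\frac{\Nm(v)}{\log\Nm(v)}\,\Big(\prod_i\max\{1,h(\eta_i)\}\Big)\log B',$$
where it is essential to use the refined parameter
$$B'=\max_i |b_i|\,\frac{\max\{1,h(\eta_i)\}}{\max\{1,h(\eta_s)\}}$$
(Matveev's $B$ with the heights weighted in). The root of unity contributes a bounded modified height.

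\emph{Step 3: close the estimate.} By Step 1, $B'\ll c_2^s\,h(\gamma)$, so $\log B'\ll s+\log\max\{2,h(\gamma)\}$. The additive $s$ is absorbed by enlarging $c(\delta)^s$. Finally, Step 1's product bound returns us to $\prod\max\{1,h(\gamma_i)\}$, which gives the stated inequality.

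\emph{Main obstacle.} The delicate point is Step 2 together with the exponent bound in Step 1. A naive bound $\max|b_i|\le c^s h(\gamma)\prod_j h(\eta_j)$ would give $\log B\approx\log h(\gamma)+\log\prod_j h(\eta_j)$. The extra logarithm would then multiply the already-present product of heights, destroying the clean shape. So I would first try to make sure that the weighted $B'$ really absorbs the heights of the $\eta_j$, which requires the quasi-orthogonality from \cite{EvertseGyory} rather than a bare Minkowski basis.
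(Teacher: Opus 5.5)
\paragraph{Relation to the paper's proof.} Your outline is essentially the argument behind the result the paper invokes. The paper does not rederive anything from Matveev and Yu. It quotes Theorem 4.2.1 of Evertse--Gy\"ory with $\alpha=1$, which is itself proved from complex and $p$-adic linear forms in logarithms plus their geometry-of-numbers lemma.

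\paragraph{The gap: $\Nm(v)$ inside the logarithm.} The one thing the paper does by hand is exactly what your Step 2 skips. In the non-Archimedean case, Evertse--Gy\"ory's Theorem 4.2.1 has the factor $\log\max\{e,\Nm(v)h(\gamma)\}$, not $\log h(\gamma)$. That is the version where heights enter only through the product of heights of the $\gamma_i$ and the place only through $\Nm(v)/\log\Nm(v)$. Your own heuristic points the same way: passing to a power of order up to $\Nm(v)$ inflates the exponents by that factor. That puts $\Nm(v)$ inside $\log B$, not only into the prefactor.

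So your Step 3 inequality $\log B'\ll s+\log\max\{2,h(\gamma)\}$ is not justified at $p$-adic places. When $h(\gamma)$ is small compared with $\Nm(v)$, $\log(\Nm(v)h(\gamma))$ is not $\ll\log\max\{2,h(\gamma)\}$. The fix is the paper's case split:
\begin{itemize}
\item If $h(\gamma)\ge\Nm(v)/\log\Nm(v)$, then $\Nm(v)\le h(\gamma)\log\Nm(v)\le h(\gamma)\Nm(v)^{1/2}$. Hence $\Nm(v)\le h(\gamma)^2$ and $\log(\Nm(v)h(\gamma))\le 3\log h(\gamma)$.
\item If $h(\gamma)<\Nm(v)/\log\Nm(v)$, no transcendence is needed. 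The product formula gives $-\log\|1-\gamma\|_v\le\delta\, h(1-\gamma)\le\delta(h(\gamma)+\log 2)\ll_\delta \Nm(v)/\log\Nm(v)$, which is already below the right-hand side.
\end{itemize}

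\paragraph{Smaller points.}
\begin{itemize}
\item The weighted $B'$ is not essential, and Yu's theorem is normally stated with plain $B=\max_i|b_i|$. Your estimate $|b_i|h(\eta_i)\le c_2^s h(\gamma)$ together with $h(\eta_i)\gg_\delta 1$ already gives $\max_i|b_i|\le c(\delta)^s h(\gamma)$.
\item The $\gamma_i$ are only assumed to generate $\Gamma/\Gamma_{\rm tor}$, so they may be dependent or even torsion. In the paper's application $\gamma_i=\zeta(\xi_i)$, and $\zeta$ kills $\Q^\times$; this is why $\max\{1,h(\gamma_i)\}$ appears. Your Hadamard step has to be run on a maximal independent subset.
\item Step 1 carries the whole weight, and you take it on faith ("should give"). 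The generic route, Minkowski's second theorem plus a Mahler-type reduced basis, loses factors like $s!$. So the clean $c(\delta)^s$ rests on precisely what Evertse--Gy\"ory prove. At that point it is simpler to cite their Theorem 4.2.1 directly, as the paper does, and add only the case split above.
\end{itemize}
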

Note that Theorem 4.2.1 in \cite{EvertseGyory} uses $\prod_{i=1}^s h(\gamma_i)$ rather than $\prod_{i=1}^s \max\{1,h(\gamma_i)\}$. This weaker version is better suited for our application.

Also, Theorem 4.2.1 has the factor $\log\max\{e,(\Nm(v)h(\gamma))\}$ instead of $\log \max\{2,h(\gamma)\}$ but this is not a problem: our version follows when $h(\gamma)\ge \Nm(v)/\log \Nm(v)$, and when $h(\gamma)<\Nm(v)/\log \Nm(v)$ one has the effective estimates
$$
-\log\|1-\gamma\|_v \ll_\delta h(\gamma)+1 \ll \frac{\Nm(v)}{\log \Nm(v)}.
$$
\subsection{Small generators} The following technical lemma will allow us to control the bound coming from linear forms in logarithms. Here, for a number field $K$ we let $\mu_K$ denote the group of roots of unity of $K$.

\begin{lemma}[Existence of small generators]\label{LemmaReg} Let $K$ be a number field of degree $d$. Let $S$ be a finite set of places of $K$ containing the Archimedean ones, let $s=\#S$, and let $R_S$ be the $S$-regulator of $K$. Assume that for every place $v$ of $\Q$, $S$ contains all or none of the places of $K$ above $v$. Let $\omega\in O_K$ and define $q=|\Norm (\omega)|$ as well as $q'$ the coprime-to-$S$ part of $q$ (i.e., removing from $q$ the prime factors below $S$). Assume that $q'>1$. The group $O_{K,S}^{\times}\langle \omega\rangle$ admits multiplicatively independent elements $\xi_1,...,\xi_s$  whose images modulo torsion generate a subgroup $\Xi \le O_{K,S}^{\times}\langle \omega\rangle/\mu_K$ of index at most $s!$ satisfying the estimate
$$
\prod_{i=1}^s h(\xi_i) \le  \frac{(s+1)!}{(2d)^s}R_S \log q'.
$$
\end{lemma}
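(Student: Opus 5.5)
We embed the group $\Gamma:=O_{K,S}^\times\langle\omega\rangle$ modulo $\mu_K$ as a lattice in a real vector space of dimension $s$, compute its covolume, and then apply Minkowski's second theorem to a suitable convex body. First, since $q'>1$, some prime $\pfrak\notin S$ divides $\omega$. Hence no positive power of $\omega$ lies in $O_{K,S}^\times$, and $\Gamma/\mu_K$ is free of rank $(s-1)+1=s$. Let $S'$ be the set of places outside $S$ at which $\omega$ has positive valuation. For $\gamma\in\Gamma$ we have $\|\gamma\|_v=1$ for all $v\notin S\cup S'$. We use the log map
$$
\ell(\gamma)=(\log\|\gamma\|_v)_{v\in S\cup S'} .
$$
Its image lies in the sublattice of the hyperplane $\sum_v x_v=0$ (by the product formula). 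The coordinates in $S'$ are all proportional to the $S'$-part of $\ell(\omega)$: $\gamma=u\omega^n$ gives $\ell(\gamma)_{S'}=n\,\ell(\omega)_{S'}$. So the image spans an $s$-dimensional space $V$, namely $\ell(O_{K,S}^\times)\oplus\R\ell(\omega)$.

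Next we compute the covolume of $\ell(\Gamma)$ in $V$, measured with a convenient projection. Project $V$ onto the coordinates indexed by $S$ together with one extra coordinate $t$, defined as the sum of the $S'$-coordinates. In these coordinates $\ell(O_{K,S}^\times)$ has $t=0$ and covolume $R_S$ (up to the standard factor $\sqrt{s}$, depending on normalization; the usual definition deletes one coordinate). The element $\omega$ has
$$
t=\sum_{v\in S'}\log\|\omega\|_v=-\log q'
$$
by the product formula restricted to non-$S$ places, because the $S'$-part of $|\Norm\omega|$ is exactly $q'$. Hence, deleting one $S$-coordinate, $\ell(\Gamma)$ projects isomorphically onto a lattice $\Lambda\subset\R^{s}$ of covolume $R_S\log q'$. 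This is where the hypothesis that $S$ is saturated over $\Q$ is used: it makes the non-$S$ part of $\Norm(\omega)$ exactly $q'$, with no mixing between primes below $S$ and primes outside $S$.

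Now we compare heights with an $L^1$-norm. For $\gamma\in K^\times$,
$$
h(\gamma)=\frac{1}{2d}\sum_v\bigl|\log\|\gamma\|_v\bigr| ,
$$
using the product formula. We check that on $\Lambda$ the quantity $2d\,h(\gamma)$ is a norm $N(x)$ in the projected coordinates. Here $N(x)=\sum_{v\in S}|x_v|+|t|$ plus the absolute value of the deleted coordinate. The $S'$-coordinates have constant signs, so their absolute values sum to $|t|$, which is why $N$ is a genuine norm with a clean formula. The unit ball $B$ of $N$ contains a cross-polytope-like body. Its volume is bounded below by that of $\{\sum_{i}|y_i|+|\sum_i y_i|\le 1\}$ in $\R^s$, and this volume is at least $2^s/(s+1)!$ up to an explicit factor. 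Minkowski's second theorem gives successive minima $\lambda_1\cdots\lambda_s\le 2^s\mathrm{covol}(\Lambda)/\mathrm{vol}(B)$, realized by independent vectors $\ell(\xi_1),\dots,\ell(\xi_s)$. Then
$$
\prod_i h(\xi_i)=\frac{\prod_i\lambda_i}{(2d)^s}\le\frac{(s+1)!}{(2d)^s}R_S\log q'
$$
once the volume computation works out. Finally, the standard fact that independent vectors realizing the successive minima generate a sublattice of index at most $s!$ gives the index bound for $\Xi$.

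The main obstacle is the bookkeeping in the volume and covolume normalizations. We must check that the body $B$ has volume at least $2^s/(s+1)!$, which is the volume of the simplex-type body $\{\sum_{i=1}^{s}|y_i|+|\sum y_i|\le 2\}$; a direct computation or a lower bound by the inscribed cross-polytope should give this. We must also confirm that the covolume is exactly $R_S\log q'$ under the paper's normalization of $R_S$. We would first verify the case $s=1$ by hand to fix the constants.
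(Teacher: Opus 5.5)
Your route is sound, and it is genuinely different from the paper's. The paper takes the basis $\tau_1,\dots,\tau_{s-1},\omega$ and computes $\|\lambda(\tau_1)\wedge\cdots\wedge\lambda(\tau_{s-1})\wedge\lambda(\omega)\|_1=(s+1)R_S\log q'$ in $\bigwedge^s\R^T$, by sorting the $s$-minors according to whether they use a row from $T-S$. It then quotes Theorem 1.2 of Akhtari--Vaaler, a general result for subgroups of $S$-units in arbitrary position. You instead use the special structure here. The $S'$-coordinates of $\ell(u\omega^n)$ are $n\,\ell(\omega)_{S'}$ and have constant sign, so they collapse into the single coordinate $t$ without changing $2d\,h$. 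The problem then becomes Minkowski's second theorem for the standard $L^1$-norm on the sum-zero hyperplane of $\R^{s+1}$. Several of your steps are correct as stated:
\begin{itemize}
\item the norm identification;
\item the covolume (the $t$-row of the basis matrix is $(0,\dots,0,-\log q')$, so the determinant is $\pm R_S\log q'$, with the saturation hypothesis used exactly where you say);
\item the index bound $s!$.
\end{itemize}

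The step you leave open, however, carries the whole quantitative content, and as written it is not settled. Your claim that $2^s/(s+1)!$ is the volume of $\{\sum|y_i|+|\sum y_i|\le 2\}$ is false: for $s=1$ that body is $[-1,1]$. Your fallback also fails. The inscribed cross-polytope $\{\sum|y_i|\le 1/2\}$ only gives $\mathrm{vol}(B)\ge 1/s!$, which is smaller than $2^s/(s+1)!$ once $s\ge 2$, so you would lose a factor $2^s/(s+1)$.

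What works is the exact computation. Put $y_0=-\sum_{i\ge 1}y_i$ and split $B$ by the sign pattern of $(y_0,\dots,y_s)$. A pattern with $p$ positive and $s+1-p$ negative entries is the region $\{\sum_{+}y_i=\sum_{-}|y_i|\le 1/2\}$. Its volume is $2^{-s}/(s\,(p-1)!\,(s-p)!)$. Summing over the $\binom{s+1}{p}$ patterns and applying Vandermonde gives
$$
\mathrm{vol}(B)=\frac{1}{2^s\, s!}\binom{2s}{s}.
$$
Next, $(s+1)\binom{2s}{s}\ge 4^s$ holds by induction: the ratio of consecutive terms of $(s+1)\binom{2s}{s}4^{-s}$ is $\frac{(s+2)(2s+1)}{2(s+1)^2}\ge 1$, and the first term is $1$. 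Minkowski then yields
$$
\prod_i h(\xi_i)\le \frac{4^s\, s!}{\binom{2s}{s}(2d)^s}R_S\log q'\le \frac{(s+1)!}{(2d)^s}R_S\log q'.
$$

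Once this computation is carried out, your approach gives a self-contained, elementary proof. It avoids the Akhtari--Vaaler theorem, which is needed only when the log-image is a general subspace rather than this rank-one extension of $\ell(O_{K,S}^\times)$. It even gives a marginally better constant than the paper's $(s+1)!$.
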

\begin{proof} Let $T$ be the set of places of $K$ consisting of $S$ and the primes dividing $\omega$, so that $O_{K,S}^{\times}\langle \omega\rangle\le O_{K,T}^{\times}$. We will use the logarithmic map $\lambda:  O_{K,T}^{\times} \to \R^{T}$ given by
$$
\lambda(\alpha) = (\log\|\alpha\|_v)_{v\in T}.
$$

Let $\tau_1,...,\tau_{s-1}$ be a system of  fundamental $S$-units of $K$, then $\tau_1,...,\tau_{s-1},\omega$ give a basis of $O_{K,S}^{\times}\langle \omega\rangle/\mu_K$ because $q'>1$. Endow $\bigwedge^{s} \R^T$ with the $\|-\|_1$-norm. We claim that
\begin{equation}\label{EqnExterior}
\|\lambda(\tau_1)\wedge\cdots\wedge\lambda(\tau_{s-1})\wedge\lambda(\omega)\|_1 = (s+1) R_S \log q'
\end{equation}
where $R_S$ is the $S$-regulator of $K$.

Indeed, consider the matrix with columns $\lambda(\tau_1),...,\lambda(\tau_{s-1}),\lambda(\omega)$. There are $s$ rows coming from places in $S$, and every row coming from $T'=T-S$ has $0$'s in the first $s-1$ entries (note that $T'\ne\emptyset$ because $q'>1$). Thus, a non-vanishing $s$-minor has either no row from $T'$ or exactly one. 

By the product formula, in the first case the minor has absolute value
$$
R_S\left|\sum_{v\in S} \log \|\omega\|_v\right| = R_S\left|\sum_{v\in T'} -\log \|\omega\|_v\right| =R_S\log q'
$$  
where we have used our hypothesis on $S$. 

In the second case a minor with $T'$-row indexed by a place $\pfrak$ has  absolute value $R_S|\log \|\omega\|_\pfrak |$. There are $s$ such cases for each $\pfrak\in T'$ and their absolute values add up to $sR_S|\log \|\omega\|_\pfrak |$. Adding over $\pfrak\in T'$ gives $sR_S\log q'$. 

Putting the first and the second cases together gives the claimed equality \eqref{EqnExterior}.

The result now follows directly from work of Akhtari--Vaaler, namely, Theorem 1.2 in \cite{AkhtariVaaler} (see the remark after that theorem to convert $L^1$-norms into heights).
\end{proof}


\section{Thue--Mahler equations}
The goal of this section is to prove Theorem \ref{ThmThueMahler1}; at the end we deduce Theorem \ref{ThmThueMahler2}. We keep the same notation as in the statement of Theorem \ref{ThmThueMahler1}.
\subsection{Setup}

Multiplying $F$ by $-1$ and replacing $(w,m)$ by $(|w|,|m|)$ if needed, we can assume that $w, m>0$. We recall that we are under the assumptions that $\gcd(x,y)=1$, that no prime $p_j$ divides $m$, and that $F$ is irreducible. 
 
 Let $L\subseteq \C$ be the splitting field of $F(U,V)$ and let $\rho_1,\rho_2,\rho_3\in L$ and $a\in\Z$ be such that 
$$
F(U,V)=a\prod_{i=1}^3(U-\rho_iV).
$$
We note that $a\rho_j$ are algebraic integers.

As is classical in the study of Thue equations, define the quantities
$$
\beta_i = x-\rho_iy
$$
for $i=1,2,3$. Choose $x',y'\in \Z$ such that $xy'-x'y=1$ and let
$$
G(U,V)=F(xU+x'V, yU + y'V).
$$
The point of this change of variables is that now we get $wm$ as the coefficient of $U^3$:
$$
G(U,V) = F(x,y)U^3 + b_1U^2V+ b_2 UV^2 + b_3 V^3 = wmU^3 + b_1U^2V+ b_2 UV^2 + b_3 V^3
$$
for suitable integers $b_j$. (This change of variables for Thue's equation goes back at least to the work of Bombieri--Schmidt \cite{BombieriSchmidt}.) 

Let 
$$
P(Y)=(wm)^{2}G(Y/(wm),1) = Y^3 + b_1Y^2 + wmb_2Y+(wm)^2b_3.
$$
Then $|{\rm disc}(P)| = (wm)^2|{\rm disc}(F)|=(wm)^2D$. 

Let $\theta_1,\theta_2,\theta_3$ be the roots of $G(U,1)$ labeled such that
$$
(x-y\rho_i)(U-\theta_iV) = (xU+x'V) - \rho_i( yU + y'V).
$$ 
We note that $wm\theta_i$ are the roots of $P$ and they are algebraic integers.

From the previous formula we get
$$
\theta_i = -\frac{x'-y'\rho_i}{x-y\rho_i} = -\frac{x'-y'\rho_i}{\beta_i} 
$$
and we deduce
\begin{equation}\label{EqnDifTheta}
\begin{aligned}
\theta_i-\theta_j &= \frac{x'-y'\rho_j}{\beta_j}- \frac{x'-y'\rho_i}{\beta_i}   = \frac{x'(\beta_i-\beta_j) - y'(\beta_i\rho_j - \beta_j\rho_i)}{\beta_i\beta_j} \\
& = \frac{x'y(\rho_j-\rho_i) - y' x(\rho_j-\rho_i)}{\beta_i\beta_j}  = \frac{\rho_i-\rho_j}{\beta_i\beta_j}. 
\end{aligned}
\end{equation}
Let $K=\Q(\theta_1)\subseteq L\subseteq \C$ and define
$$
\omega  = (wm)^2(\theta_1-\theta_2)(\theta_1-\theta_3)= P'(wm\theta_1)\in K
$$
which is a non-zero algebraic integer because $wm\theta_i$ are distinct algebraic integers. Thus
$$
\omega\in O_K.
$$ 
Note that
\begin{equation}\label{EqnNomega}
|\Norm(\omega)| = |P'(wm\theta_1)P'(wm\theta_2)P'(wm\theta_3)| = |{\rm disc}(P)|=(wm)^2D.
\end{equation}
It is important to observe that $D_K$, the absolute value of the discriminant of $K=\Q(\rho_1)=\Q(\theta_1)$, divides $D=|{\rm disc}(F)|=|{\rm disc}(G)|$, that is,
\begin{equation}\label{EqnDdiv}
D_K|D,\quad\mbox{ thus }D\ge 2.
\end{equation}
This is because of the theory of orders attached to binary forms \cite{BirchMerriman, Nakagawa} as described, for instance, in Section 2.1 of \cite{Wood}.

Consider the field embeddings $\sigma_i:K\to L$ given by $\sigma_i(\theta_1)=\theta_i$ and define the multiplicative morphism
$$
\zeta:K^{\times}\to L^\times,\quad \zeta(\xi) = \frac{\sigma_2(\xi)}{\sigma_3(\xi)}. 
$$
With this at hand, we finally introduce the key quantity in this argument:
\begin{equation}\label{Eqneta}
\eta = \zeta(\omega ) = \frac{P'(wm\theta_2)}{P'(wm\theta_3)} = -\frac{\theta_2-\theta_1}{\theta_3-\theta_1}=-\frac{\rho_1-\rho_2}{\rho_1-\rho_3}\cdot \frac{\beta_3}{\beta_2}.
\end{equation}
This quantity is defined in such a way that
\begin{equation}\label{Eqn1+eta}
1+\eta = 1 -\frac{\theta_2-\theta_1}{\theta_3-\theta_1} = \frac{\theta_3-\theta_2}{\theta_3-\theta_1} = \frac{\rho_3-\rho_2}{\rho_3-\rho_1}\cdot \frac{\beta_1}{\beta_2}.
\end{equation}
In fact, this equation makes it transparent that $\eta$ is not a new quantity: it is obtained from the classical \emph{Siegel  identity}:
$$
(\rho_2-\rho_3)\beta_1+(\rho_3-\rho_1)\beta_2+(\rho_1-\rho_2)\beta_3=0.
$$

We need to get rid of a problematic case before we can go further with the argument.
\begin{lemma}\label{LemmaRoot1} We have that $1+\eta\ne 0$. Furthermore, if $\eta$ is a root of unity, then it is a primitive cubic root of unity and for each place $v$ of $L$ we have $\|1+\eta\|_v= 1$.
\end{lemma}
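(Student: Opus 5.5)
From \eqref{Eqn1+eta} we have $1+\eta=(\theta_3-\theta_2)/(\theta_3-\theta_1)$. The roots $\theta_1,\theta_2,\theta_3$ of $G(U,1)$ are distinct because $F$, hence $G$, is irreducible with nonzero discriminant. So the numerator is nonzero, and $1+\eta\neq 0$. This is the first claim.

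For the second claim, suppose $\eta$ is a root of unity. Since $\eta=\sigma_2(\omega)/\sigma_3(\omega)$ with $\sigma_2,\sigma_3$ distinct embeddings of the cubic field $K$, we have $\eta\in L$, and the plan is to pin down the order of $\eta$ using the Galois action. The degree $[L:\Q]$ is $3$ or $6$, so $L$ contains only roots of unity of order dividing $4$ or $6$. First rule out $\eta=1$: this would give $\theta_2-\theta_1=-(\theta_3-\theta_1)$, i.e.\ $2\theta_1=\theta_2+\theta_3$. Adding $\theta_1$ gives $3\theta_1=\theta_1+\theta_2+\theta_3\in\Q$, contradicting irreducibility. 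Also $\eta\neq-1$, since $1+\eta\neq 0$.

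The main step is to exclude orders $4$ and $6$ and keep order $3$. I would use the symmetry of the cross-ratio-like quantity
$$
\eta=-\frac{\theta_2-\theta_1}{\theta_3-\theta_1}.
$$
The Galois group of $L/\Q$ contains a $3$-cycle $g$ permuting the $\theta_i$ cyclically, since it acts transitively on three roots. Applying $g$ gives
$$
g(\eta)=-\frac{\theta_3-\theta_2}{\theta_1-\theta_2}=-\frac{1}{1+\eta},
$$
using \eqref{Eqn1+eta} together with the identity $(\theta_3-\theta_2)/(\theta_1-\theta_2)=1/(1+\eta)$. Now $g(\eta)$ is a root of unity of the same order as $\eta$, so $|1+\eta|=1$ under every complex embedding. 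A root of unity $\eta$ with $|1+\eta|=1$ must be $e^{\pm 2\pi i/3}$, because $|1+e^{it}|=1$ forces $\cos t=-1/2$. Hence $\eta$ is a primitive cube root of unity.

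Finally, in that case $1+\eta=-\eta^2$ is itself a root of unity. Therefore $\|1+\eta\|_v=1$ at every place $v$ of $L$, Archimedean or not.

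The only delicate point is verifying the identity $g(\eta)=-1/(1+\eta)$ correctly from the labeling. If the $3$-cycle instead gives $-(1+\eta)/\eta$ or a similar Möbius image, the same modulus argument still yields $|1+\eta|=1$. Alternatively, one can avoid Galois theory entirely: take complex conjugates of the root of unity $\eta$ and use that $1+\eta$ and $1+\eta^{-1}$ have equal absolute value, combined with a norm computation. I expect the Galois route to work directly.
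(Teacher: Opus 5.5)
Your argument is essentially the paper's: a $3$-cycle in $\Gal(L/\Q)$ shows that $1+\eta$ is a root of unity, and then $|\eta|=|1+\eta|=1$ forces $\eta=e^{\pm 2\pi i/3}$. There is one computational slip. For $g:\theta_1\mapsto\theta_2\mapsto\theta_3\mapsto\theta_1$ one actually gets $g(\eta)=-(1+\eta)/\eta$. Your claimed identity $(\theta_3-\theta_2)/(\theta_1-\theta_2)=1/(1+\eta)$ is false in general: it is equivalent to $\eta^2+\eta+1=0$, which is what you are trying to prove. The value $-1/(1+\eta)$ is $g^2(\eta)$ instead. As you anticipated, either M\"obius image gives $|1+\eta|=1$, so the proof goes through. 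Your preliminary remarks on roots of unity in $L$ and on $\eta\neq\pm1$ are not needed.
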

\begin{proof} That $1+\eta\ne 0$ follows from \eqref{Eqn1+eta} and the fact that the $\theta_j$'s are distinct.

Assume now that $\eta$ is a root of unity. Since $F$ is irreducible, there is a $3$-cycle $\tau\in\Gal(L/\Q)$. From \eqref{Eqneta} and \eqref{Eqn1+eta} we deduce that both $\eta$ and $1+\eta$ are roots of unity, and with our chosen complex embedding $L\subseteq \C$ we get $|\eta|=|1+\eta|=1$. It follows that $\eta=\exp(\pm 2\pi i/3)$.
\end{proof}

From now on, the fact that $1+\eta\ne 0$ will be used without further mention.

We define $X=\max\{3,|x|,|y|\}$. Bounding the size and height of the roots of an integer polynomial by the size of its coefficients gives
\begin{equation}\label{Eqnhrho}
|\rho_i|\ll H\quad\mbox{ and }\quad h(\rho_i)\ll \log(2H).
\end{equation}

We will later need a bound for $h(\eta)$. From \eqref{Eqneta} we get
$$
\eta = - \frac{\rho_2-\rho_1}{\rho_3-\rho_1} \cdot \frac{\beta_3}{\beta_2}
$$
and thus $\eta$ is a fixed rational function of the $\rho_i$ and $x,y$. From \eqref{Eqnhrho} we deduce
\begin{equation}\label{Eqnbdheta}
h(\eta) \ll 1+ \log (HX).
\end{equation}

At this point we consider any labeling of the roots $\rho_i$, but later in some arguments we will need to make a choice of the labeling that depends on a chosen place $v$ of $L$. The main argument will eventually use the Archimedean place induced by  $L\subseteq \C$, but the \emph{proof} of Lemma \ref{Lemmabdz} chooses a non-Archimedean place; this is not a problem because the statement of Lemma \ref{Lemmabdz} does not involve the roots $\rho_i$. So, a more refined notation should be $\eta_v$ for a choice of place $v$ of $L$, but since we will only use one place $v$ outside the proof of Lemma \ref{Lemmabdz}, we consider this to be an unnecessary complication.

\subsection{Outline of the argument}

Now that the main objects have been defined, let us briefly describe the strategy in the special case that $S$ only consists of the Archimedean places. 

In a nutshell, the classical idea going back to Thue is that a solution $x,y$ of $F(x,y)=m$ with large $x$ and $y$ would produce an $x/y$ that approximates a root, say $\rho_1$, too well. Then one confronts this with results of Diophantine approximation. To get effectivity one replaces Diophantine approximation by transcendence theory (linear forms in logarithms), but then the approximating quantity will no longer be 
$$
\frac{x}{y}-\rho_1 = \frac{\beta_1}{y}
$$
but instead
$$
1+\eta = \frac{\rho_3-\rho_2}{\rho_3-\rho_1}\cdot \frac{\beta_1}{\beta_2}.
$$
Still, the term that makes the approximation good is $\beta_1$ (assuming it is small). But the definition of $\eta$ is made in such a way that a suitable power of it belongs to a multiplicative group with controlled generators (Lemma \ref{LemmaReg}), and then one can use Theorem \ref{ThmLFL}. More precisely, we will use that $\zeta$ is a morphism of multiplicative groups and $\omega\in O_K^\times\langle \omega\rangle$, so that
$$
\eta = \zeta(\omega) \in \zeta(O_K^\times\langle \omega\rangle)
$$
(in the case of general $S$ we will use $O_{K,S}^\times\langle \omega\rangle$). The unit part of $O_K^\times\langle \omega\rangle$ might look superfluous for this, but the point is that $O_K^\times$ gives additional freedom to choose small generators (Lemma \ref{LemmaReg}).

It should be stressed that our $\eta$ is quite classical in the literature on Thue equations and it is motivated by Siegel's identity, see for instance Equation (10) in \cite{BiluHanrot} for an explicit mention. Also, the important idea of using a conjugate ratio coming from a group of the form $O_K^\times\langle \omega \rangle$ already appears in \cite{Okazaki}.

Finally, we mention that in order to include $p$-adic places (Mahler's setup in the Thue--Mahler equation) one needs to control the exponent of primes in $w$ using linear forms in $p$-adic logarithms and then revisiting the previous strategy. This step is the content of Section \ref{Secpadic}, following ideas of Coates \cite{Coates1, Coates2}.

\subsection{Lower bound: linear forms in logarithms}\label{SecLower}

We need a preliminary lower bound for $\|1+\eta\|_v$ both in the Archimedean and non-Archimedean cases. The non-Archimedean one will be used in Section \ref{Secpadic} to control the exponents of the factorization of $w$, while the Archimedean one will be used in Section \ref{SecConfront} to confront the main output of Section \ref{SecUpper}, to the effect that $-\eta$ is a good Archimedean approximation of $1$.

From now on, $S$ is the set of Archimedean places of $K$ and the places of $K$ above some of the primes $p_1,...,p_r$, and $R_S$ is the $S$-regulator of $K$.

\begin{lemma}\label{LemmaLowerBound} For each place $v$ of $L$ we have 
$$
-\log\|1+\eta\|_v \le (\kappa s)^{2s}\frac{\Nm(v)}{\log \Nm(v)} \log(2+\log(XH))R_S\log(m^2D)
$$
where $\kappa>0$ is an absolute and effective constant (in particular, independent of $v$).
\end{lemma}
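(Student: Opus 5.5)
We apply Theorem \ref{ThmLFL} to $\gamma=-\eta$ in the group $\Gamma=\zeta(O_{K,S}^\times\langle\omega\rangle)\subseteq L^\times$, so that $\|1-\gamma\|_v=\|1+\eta\|_v$. The generators of $\Gamma$ come from Lemma \ref{LemmaReg}.

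\textbf{Root-of-unity case.} If $-\eta$ is a root of unity, then so is $\eta$. By Lemma \ref{LemmaRoot1}, $\eta$ is then a primitive cube root of unity and $\|1+\eta\|_v=1$, so the left-hand side is $0$ and there is nothing to prove. We therefore assume $\eta$ is not a root of unity, so $\gamma=-\eta\ne 1$.

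\textbf{Setting up Lemma \ref{LemmaReg}.} We first check its hypotheses.
\begin{itemize}
\item $S$ consists of the Archimedean places and all places above the $p_i$, so it contains all or none of the places above each place of $\Q$.
\item By \eqref{EqnNomega}, $q=|\Norm(\omega)|=(wm)^2D$. Since $w$ is supported on the $p_i$ and $m$ is coprime to them, the coprime-to-$S$ part satisfies $q'\mid m^2D$, hence $\log q'\le \log(m^2D)$.
\item The lemma needs $q'>1$. If $q'=1$, then $\omega$ is an $S$-unit and the group is just $O_{K,S}^\times$. In that case we use fundamental $S$-units with the standard small-height bound $\prod h(\tau_i)\ll_s R_S$ (again via Akhtari--Vaaler), and the right-hand side of the lemma is at least this up to constants, since $\log(m^2D)\ge\log 2$ by \eqref{EqnDdiv}.
\end{itemize}
This gives $\xi_1,\dots,\xi_s$ with $\prod h(\xi_i)\le \frac{(s+1)!}{6^s}R_S\log(m^2D)$, generating a subgroup $\Xi$ of index at most $s!$ modulo $\mu_K$.

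\textbf{Transfer to $\Gamma$.} We take $\gamma_i=\zeta(\xi_i)$. Since $\sigma_2,\sigma_3$ preserve heights, $h(\gamma_i)\le 2h(\xi_i)$. The issue is that the $\gamma_i$ generate only a finite-index subgroup of $\Gamma$ modulo torsion, while Theorem \ref{ThmLFL} asks for generators of $\Gamma/\Gamma_{\rm tor}$. To handle this, we set $N=s!$ and replace $\Gamma$ by $\Gamma'=\langle \gamma_1,\dots,\gamma_s, \mu_L\rangle$. Then $\eta^{N}$ equals a product of the $\gamma_i$ times a root of unity, hence lies in $\Gamma'$.

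We reduce from $\eta$ to $\eta^N$ using $1-(-\eta)^N=(1+\eta)\cdot(\text{a polynomial in }\eta)$:
\begin{itemize}
\item for Archimedean $v$, $-\log\|1+\eta\|_v\le -\log\|1-(-\eta)^N\|_v+\log N+O(N h(\eta))$;
\item for non-Archimedean $v$, the extra factor has $\|\cdot\|_v\le\max(1,\|\eta\|_v)^{N}$.
\end{itemize}
This route introduces a term $Nh(\eta)$, which is far too large. The cleaner alternative is to apply Theorem \ref{ThmLFL} to $\Gamma''=\langle\Gamma',\eta\rangle$ directly. Its rank is still $s$, and a system of generators modulo torsion can be chosen among $\gamma_1,\dots,\gamma_s,\eta$. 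Better still, $\Xi$ has index at most $s!$, so a basis of the saturation $\Gamma''/{\rm tor}$ can be found whose heights have product at most $s!$ times the covolume bound. Concretely, we rerun the Akhtari--Vaaler bound on the full lattice $\zeta(O_{K,S}^\times\langle\omega\rangle)$: any lattice of rank $s$ containing $\Xi$ has a basis (possibly after index $\le s!$) of comparable height product. Our plan is to apply Theorem \ref{ThmLFL} to the group generated by $\zeta(\xi_i)$ and the roots of unity, using the fact that $-\eta\in\zeta(O_{K,S}^\times\langle\omega\rangle)$ up to the index issue, which we absorb by the $N$-th power trick with $\gamma=(-\eta)^N$. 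Then $h(\gamma)\le Nh(\eta)$ enters only inside $\log\max\{2,h(\gamma)\}\ll s\log s+\log(1+\log(XH))$ by \eqref{Eqnbdheta}. The remaining cofactor $(1-(-\eta)^N)/(1+\eta)$ contributes $O(Nh(\eta))$, and this is the main obstacle. To avoid it, we would choose the $\xi_i$ so that $\Xi$ is the full group modulo torsion, by invoking Akhtari--Vaaler for a basis rather than for independent elements, at the cost of an extra factor $s^{O(s)}$.

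\textbf{Conclusion.} With a basis in hand, Theorem \ref{ThmLFL} (with $\delta\le 6$) gives
$$-\log\|1+\eta\|_v\le c(6)^s\frac{\Nm(v)}{\log\Nm(v)}\log\max\{2,h(\eta)\}\prod\max\{1,2h(\xi_i)\}.$$
We need to bound $\prod\max\{1,h(\xi_i)\}\le \prod(1+h(\xi_i))$. Each $h(\xi_i)$ is bounded below by an absolute constant, by a Dobrowolski-type bound in degree $3$, so $\max\{1,h(\xi_i)\}\ll h(\xi_i)$ and the product is $\ll C^s\prod h(\xi_i)$. Collecting $c(6)^s\,C^s\,(s+1)!\,s^{O(s)}\le(\kappa s)^{2s}$ and using $\log\max\{2,h(\eta)\}\ll\log(2+\log(XH))$ gives the stated bound.
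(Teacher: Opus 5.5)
Your final argument is correct once one step is properly justified. It differs from the paper in how it handles the index of $\Xi$, and your reason for dropping the paper's treatment is mistaken.

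\textbf{The paper's route.} The paper keeps the subgroup $\Xi$ from Lemma~\ref{LemmaReg} and lets $N\le s!$ be its \emph{actual} index, so that $\omega^N$ lies in the lift of $\Xi$. (Your choice $N=s!$ need not be a multiple of the index.) It then applies Theorem~\ref{ThmLFL} to $\gamma=(-\eta)^N$ with $\gamma_i=\zeta(\xi_i)$. The cofactor $\sum_{j<N}(-\eta)^j$ costs nothing that depends on $h(\eta)$:
\begin{itemize}
\item One may assume $\|1+\eta\|_v\le 1/2$, since otherwise the bound is trivial.
\item Then $\|\eta\|_v<4$, and $\|\eta\|_v=1$ if $v$ is non-Archimedean.
\item Hence $\|1-(-\eta)^N\|_v\le e^{O(N)}\|1+\eta\|_v$, an additive loss of $O(s!)$ in $-\log\|1+\eta\|_v$, which $(\kappa s)^{2s}$ absorbs.
\end{itemize}
Meanwhile $h((-\eta)^N)\le Nh(\eta)$ only enters through $\log\max\{2,h(\gamma)\}$. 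Your estimate via $\max\{1,\|\eta\|_v\}^N$, i.e.\ a loss of $O(Nh(\eta))$, ignores that $\|\eta\|_v$ is bounded precisely when $\|1+\eta\|_v$ is small.

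\textbf{Your route.} You upgrade $\Xi$ to a basis of $O_{K,S}^\times\langle\omega\rangle/\mu_K$ and apply Theorem~\ref{ThmLFL} directly to $\gamma=-\eta$ in $\mu_L\cdot\zeta(O_{K,S}^\times\langle\omega\rangle)$. This is valid, but the justification cannot be Akhtari--Vaaler itself: as used in Lemma~\ref{LemmaReg}, it only gives a subgroup of index at most $s!$. What you need is the standard geometry-of-numbers lemma:
\begin{quote}
If $a_1,\dots,a_s$ are independent vectors of a rank-$s$ lattice with $F(a_1)\le\cdots\le F(a_s)$ for a norm $F$, then there is a basis $b_1,\dots,b_s$ with $F(b_j)\le\max\{1,j/2\}F(a_j)$.
\end{quote}
Apply it to the lattice $\lambda(O_{K,S}^\times\langle\omega\rangle)$ with $F=\frac{1}{2d}\|\cdot\|_1$, which equals $h$ on $T$-units by the product formula. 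This costs a factor at most $s!$ in $\prod h(\xi_i)$, which fits inside $(\kappa s)^{2s}$. The same remark applies to your $q'=1$ case; there the paper simply quotes a fundamental system of $S$-units with $\prod h\le s^{2s}R_S$.

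\textbf{Comparison.} The paper uses Lemma~\ref{LemmaReg} exactly as stated, plus an elementary local estimate. Your version applies Theorem~\ref{ThmLFL} more cleanly, with no powers and no case split on $\|1+\eta\|_v$, at the price of an extra lattice-basis lemma.
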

We remark that in this bound $v$ comes from $L$, while $R_S$ comes from $K$.

\begin{proof} When $\eta$ is a root of unity in $L$ we conclude by Lemma \ref{LemmaRoot1}. So we assume from now on that $\eta$ is not a root of unity.

We note that $\omega \in O_K$ as $P$ is monic with integral coefficients. Furthermore, \eqref{EqnNomega} says $q=|\Norm(\omega)|  =w^2m^2D$ and we take $q'$ as the part of $q$ coprime to all $p_i$'s; thus, $m^2|q'$ and $q'|m^2D$.

If $q'=1$ then $\omega\in O_{K,S}^\times$ and it is classical that this group has generators of controlled height. For instance, Proposition 4.3.9 in \cite{EvertseGyory} gives a system of fundamental $S$-units $\xi_1,...,\xi_{s-1}$ with
$$
\prod_{i=1}^{s-1} h(\xi_i) \le s^{2s} R_S
$$
and the reader can check that this is enough to apply Theorem \ref{ThmLFL} in a way similar to (but simpler than) the case $q'>1$ to be addressed below.

So, let us focus on the case $q'>1$. Then we can apply Lemma \ref{LemmaReg}. We write $\kappa_i>0$ for the effective absolute constants that will appear in the argument.

For  $s=\#S$ we obtain multiplicatively independent elements $\xi_1,...,\xi_s \in O_{K,S}^\times\langle \omega\rangle$ whose images modulo torsion generate a multiplicative group $\Xi$ of index $N\le s!$  in $O_{K,S}^\times\langle \omega\rangle/\mu_K$ and which satisfy
$$
\prod_{i=1}^s h(\xi_i) \le (\kappa_1 s)^sR_S \log(mD)
$$
because $\log q'\le 2\log(mD)$. If $\tilde{\Xi}$ is the lifting of $\Xi$ to $O_{K,S}^\times\langle \omega\rangle$, we note that $\omega^N\in \tilde{\Xi}$.

As the $\xi_i$ are not roots of unity (they are multiplicatively independent), this bound gives
\begin{equation}\label{EqnBdProd}
\prod_{i=1}^s \max\{1,h(\xi_i)\} \le (\kappa_2 s)^s R_S \log(mD)
\end{equation}
Here we used the fact that the height of algebraic numbers that are not roots of unity is bounded away from $0$ just in terms of the degree, see for instance \cite{Voutier} and the references therein.

We are concerned with $\|1+\eta\|_v$ but the quantity that will naturally appear is $\|1-(-\eta)^N\|_v$. We can certainly assume $\|1+\eta\|_v \le 1/2$. In that case $\|\eta\|_v < 4$ and considering the factorization of $1-(-\eta)^N$ as difference of powers, we get
$$
\|1-(-\eta)^N\|_v\le N\kappa_3^N\|1+\eta\|_v\le \exp(\kappa_4s^s)\|1+\eta\|_v
$$
and we see that it is enough to prove the claimed lower bound for $\|1-(-\eta)^N\|_v$.

We apply Theorem \ref{ThmLFL} with $\gamma = (-\eta)^N$ and $\gamma_i=\zeta(\xi_i)$ to get 
$$
-\log \|1-(-\eta)^N\|_v\le \kappa_5^{s}\cdot \frac{\Nm(v)}{\log \Nm(v)} \left(\log \max\{2,h(\eta)\}\right)\prod_{i=1}^s \max\{1,h(\gamma_i)\}
$$
where we have used that $h((-\eta)^N)\le s^sh(\eta)$.

We note that for $\xi\in K^\times$ we have
$$
h(\zeta(\xi))=h(\sigma_2(\xi)/\sigma_3(\xi)) \le h(\sigma_2(\xi))+ h(\sigma_3(\xi))  = 2h(\xi).
$$
Hence, $h(\gamma_i) \le 2 h(\xi_i)$ and we get
$$
-\log \|1-(-\eta)^N\|_v\le \kappa_6^{s}\cdot \frac{\Nm(v)}{\log \Nm(v)} \left(\log \max\{2,h(\eta)\}\right)\prod_{i=1}^s \max\{1,h(\xi_i)\}
$$
and the result follows from \eqref{Eqnbdheta} and \eqref{EqnBdProd}.
\end{proof}
\subsection{Controlling $p$-adic contributions}\label{Secpadic}

\begin{lemma}[Controlling the exponents of $w$]\label{Lemmabdz} Let  $p$ be a prime in the list $p_1,...,p_r$, and let  $\Pfrak$ be a prime in $O_L$ above $p$. Then
$$
v_p(w)\log p \ll (\kappa s)^{2s}\frac{\Norm(\Pfrak)}{\log \Norm(\Pfrak)} \log(2+\log(XH))R_S\log(m^2D) +v_p(a)\log p
$$
where $\kappa>0$ is an effective absolute constant.
\end{lemma}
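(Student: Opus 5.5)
The plan is to apply Lemma \ref{LemmaLowerBound} at the non-Archimedean place $v$ of $L$ attached to $\Pfrak$, and to show that a large $p$-adic valuation of $w$ forces $\|1+\eta\|_v$ to be very small. The key point is that the labeling of the roots $\rho_i$ may be chosen depending on $v$; the statement of the lemma does not involve the $\rho_i$.

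\textbf{Choosing the labeling.} We have
$$
F(x,y)=a\beta_1\beta_2\beta_3=wm,
$$
so $v_\Pfrak(\beta_1\beta_2\beta_3)=v_\Pfrak(w)-v_\Pfrak(a)$, using $p\nmid m$. Label the roots so that $\beta_1$ has the largest $\Pfrak$-adic valuation. Since $a\rho_i$ is integral and $\gcd(x,y)=1$, the elements $a\beta_i$ are integral.

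Next we control the valuations of $\beta_2,\beta_3$. For $i\ne j$,
$$
(\rho_j-\rho_i)y=\beta_i-\beta_j \quad\text{and}\quad (\rho_j-\rho_i)x=\rho_j\beta_i-\rho_i\beta_j .
$$
Multiply these by suitable powers of $a$ and use $\gcd(x,y)=1$. This shows that $\min\{v_\Pfrak(a\beta_i),v_\Pfrak(a\beta_j)\}$ is at most $v_\Pfrak(a^2(\rho_i-\rho_j))$, up to an additive $O(v_\Pfrak(a))$.

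Hence $\beta_2,\beta_3$ carry only a bounded share of the valuation. Concretely, $e\,v_p(w)$ (with $e$ the ramification index) is at most $v_\Pfrak(\beta_1)$ plus terms controlled by $v_\Pfrak(a)$ and by the valuations of the root differences. Those differences divide the discriminant-type quantity $a^4\prod(\rho_i-\rho_j)^2=\pm D$.

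\textbf{Main obstacle: the discriminant contribution.} The contribution of $v_p(D)$ is not present in the claimed bound. I would absorb it into the right-hand side, since $\log p\, v_p(D)\le \log D\le\log(m^2D)$. The linear-forms term already contains a factor $\log(m^2D)$ together with a constant at least $1$, so this costs nothing.

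\textbf{Bounding via $1+\eta$.} By \eqref{Eqn1+eta},
$$
1+\eta=\frac{\rho_3-\rho_2}{\rho_3-\rho_1}\cdot\frac{\beta_1}{\beta_2},
$$
so
$$
-\log\|1+\eta\|_v=(v_\Pfrak(\beta_1)-v_\Pfrak(\beta_2))\log\Norm(\Pfrak)+\bigl(\text{root-difference terms}\bigr).
$$
By the previous step, $v_\Pfrak(\beta_1)-v_\Pfrak(\beta_2)$ is at least roughly $e\,v_p(w)-O(e\,v_p(a))-O(v_\Pfrak(D))$. Since $\log\Norm(\Pfrak)\ge\log p$ and $e$ is at least $1$, this gives
$$
v_p(w)\log p\le -\log\|1+\eta\|_v+O\bigl(v_p(a)\log p\bigr)+O\bigl(\log D\bigr).
$$

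Finally, bound $-\log\|1+\eta\|_v$ by Lemma \ref{LemmaLowerBound}, noting that $\Nm(v)=\Norm(\Pfrak)$. The root-of-unity case is harmless by Lemma \ref{LemmaRoot1}: there $\|1+\eta\|_v=1$, and the argument above gives $v_p(w)\log p\ll v_p(a)\log p+\log D$ directly.

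The fiddly step I expect is the bookkeeping of the $a$-denominators and the differences $\rho_i-\rho_j$ in the valuation comparison. I would first clear denominators by working with $a\beta_i$ and $a\rho_i$ throughout, and use $v_\Pfrak(a)\le e\,v_p(a)$.
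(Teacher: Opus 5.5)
Your argument is correct and is essentially the paper's proof: relabel so that $\beta_1$ is $\Pfrak$-adically smallest, bound $v_\Pfrak(a\beta_2)$, $v_\Pfrak(a\beta_3)$ and the root differences through the discriminant, read off $v_\Pfrak(1+\eta)\ge v_\Pfrak(w)-O(v_\Pfrak(a))-O(v_\Pfrak(D))$ from \eqref{Eqn1+eta}, and conclude with Lemma \ref{LemmaLowerBound} after absorbing $\log D$. Converting the $ef$-multiples of the error terms into $O(v_p(a)\log p)+O(\log D)$ uses $ef\le[L:\Q]\le 6$, not just $e\ge 1$ and $\log\Norm(\Pfrak)\ge\log p$.

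The one genuine difference is that you also use $\rho_j\beta_i-\rho_i\beta_j=(\rho_j-\rho_i)x$ together with $\gcd(x,y)=1$. This gives $\min\{v_\Pfrak(a\beta_i),v_\Pfrak(a\beta_j)\}\le v_\Pfrak(a^2(\rho_i-\rho_j))$ uniformly. The paper only uses $a\beta_i-a\beta_j=ya(\rho_j-\rho_i)$, which picks up powers of $\|y\|_\Pfrak$. It therefore has to split into the cases $v_p(a)<v_p(y)$, where $v_p(w)=v_p(a)$ directly, and $v_p(a)\ge v_p(y)$; your version avoids that split.
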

\begin{proof} We can assume that $z=v_p(w)$ is non-zero.

Recall that $a$ is the coefficient of $U^3$ in $F(U,V)$. Let us distinguish two cases.

First we assume that $v_p(a) <  v_p(y)$.  Recall that $x$ and $y$ are coprime and note that
$$
wm = F(x,y)=ax^3 + yf \mbox{ for some }f\in \Z.
$$ 
In this case $p\nmid x$ (because $p|y$) and $z=v_p(w) = v_p(a)\ll (\log H)/\log p$, so the claimed bound holds. We no longer consider this case.

From now on we assume that $v_p(a) \ge v_p(y)$. Note that 
$$
\|a^2w\|_\Pfrak= \|a^2wm\|_\Pfrak=\|a^2F(x,y)\|_\Pfrak=\prod_{j=1}^3 \|a\beta_j\|_\Pfrak
$$
here in the last product each factor is bounded by $1$, as $0\ne a\beta_j\in O_L$. Let us assume that $\|\beta_1\|_\Pfrak$ is minimal (this is achieved by relabeling). Then
$$
\|a\beta_1\|_\Pfrak\le \|a^2w\|_\Pfrak^{1/3}.
$$

Note that $a\beta_i-a\beta_j = y(a\rho_j-a\rho_i)$. Since 
$$
a^2y^6{\rm disc}(F)=y^6\prod_{i<j}(a\rho_j-a\rho_i) ^2 = \prod_{i<j}(a\beta_i-a\beta_j)^2,
$$
it follows that for each $i<j$ we have
$$
\|y\|_\Pfrak \ge \|ya(\rho_j-\rho_i)\|_\Pfrak= \|a\beta_i-a\beta_j\|_\Pfrak\ge \|a^2y^6D\|_\Pfrak^{1/2}
$$
because $0\ne a(\rho_j-\rho_i)\in O_L$.

We note that by minimality of $\beta_1$ and the strong triangle inequality
$$
 \|a^2y^6D\|_\Pfrak^{1/2}\le \|a\beta_i-a\beta_j\|_\Pfrak \le  \|a\beta_j\|_\Pfrak\quad\mbox{ for }j=2,3.
$$

From \eqref{Eqn1+eta}  we get
$$
\begin{aligned}
\|1+\eta\|_\Pfrak &\le \frac{\|ya(\rho_3-\rho_2)\|_\Pfrak}{\|ya(\rho_3-\rho_1)\|_\Pfrak}\cdot\frac{\|a\beta_1\|_\Pfrak}{\|a\beta_2\|_\Pfrak} \\
&\le  \frac{\|y\|_\Pfrak}{\|a^2y^6D\|_\Pfrak^{1/2}}\cdot\frac{\|a^2w\|_\Pfrak^{1/3}}{\|a^2y^6D\|_\Pfrak^{1/2}} \\
&= \frac{\|w\|_\Pfrak^{1/3}}{\|a\|_\Pfrak^{4/3}\|y^5D\|_\Pfrak}
\end{aligned}
$$
and from $v_p(a)\ge v_p(y)$ we get
$$
\|1+\eta\|_\Pfrak^3 \le  \frac{\|w\|_\Pfrak}{\|a\|_\Pfrak^{19}\|D\|_\Pfrak^3}.
$$
We deduce
$$
 z\log p \ll - \log \|1+\eta\|_\Pfrak + v_p(a)\log p+ \log(D).
$$
The result now follows from Lemma \ref{LemmaLowerBound} and absorbing $\log D$.
\end{proof}

\begin{corollary}\label{Corow}
We have 
$$
\log w \ll (c_0 s)^{2s}\frac{P_+}{\log P_+}R_S \log(mD) \log(2+\log (XH)) + \log H,
$$
where $c_0>0$ is an absolute effective constant.
\end{corollary}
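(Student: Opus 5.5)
Since $w>0$ is supported on $p_1,\dots,p_r$, we have $\log w=\sum_{i=1}^r v_{p_i}(w)\log p_i$. The plan is to bound each summand by Lemma \ref{Lemmabdz} and then add the bounds up. If $r=0$ then $w=1$ and there is nothing to prove, so assume $r\ge 1$.

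\textbf{Step 1 (one prime at a time).} For each $p_i$, choose any prime $\Pfrak$ of $O_L$ above $p_i$. By definition of $P_+$ we have $\Norm(\Pfrak)\le P_+$. The function $t\mapsto t/\log t$ is increasing for $t\ge e$, and $\Norm(\Pfrak)\ge 2$. Hence $\Norm(\Pfrak)/\log\Norm(\Pfrak)\ll P_+/\log P_+$, where the case $\Norm(\Pfrak)=2$ only costs an absolute constant. Lemma \ref{Lemmabdz} then gives
$$
v_{p_i}(w)\log p_i\ll (\kappa s)^{2s}\frac{P_+}{\log P_+}\log(2+\log(XH))R_S\log(m^2D)+v_{p_i}(a)\log p_i .
$$
We may replace $\log(m^2D)$ by $2\log(mD)$, which is legitimate since $D\ge 2$ by \eqref{EqnDdiv}.

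\textbf{Step 2 (summing).} Summing over $i=1,\dots,r$, the second terms contribute $\sum_i v_{p_i}(a)\log p_i\le\log|a|\le\log H$, using $a\ne 0$ (which holds because $F$ is irreducible of degree $3$). The first terms pick up a factor $r$. Each $p_i$ has at least one place of $K$ above it in $S$, and $S$ also contains at least one Archimedean place, so $r\le s$. Therefore
$$
r\,(\kappa s)^{2s}\le s\,(\kappa s)^{2s}\le (2\kappa s)^{2s}.
$$
Setting $c_0=2\kappa$ times an absolute constant absorbs this factor together with the implicit constants, which gives the corollary.

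\textbf{Main obstacle.} There is no real obstacle, since all the work is in Lemma \ref{Lemmabdz}. The only points needing care are the monotonicity of $t/\log t$ for small norms, the factor $r$ being absorbed into $(c_0s)^{2s}$, and the $v_p(a)$ terms summing to at most $\log H$ rather than $r\log H$.
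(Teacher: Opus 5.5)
Your proposal is correct and follows the same route as the paper. You apply Lemma \ref{Lemmabdz} to each prime dividing $w$, absorb the factor $r<s$ into $(c_0 s)^{2s}$, and bound $\sum_{p\mid w} v_p(a)\log p$ by $\log|a|\le\log H$, while also supplying the small checks the paper leaves implicit ($\Norm(\Pfrak)/\log\Norm(\Pfrak)\ll P_+/\log P_+$, $r<s$, and $a\neq 0$).
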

\begin{proof} This is a direct computation using Lemma \ref{Lemmabdz}:
$$
\begin{aligned}
\log w &= \sum_{p|w} v_p(w)\log p\\
& \ll s(\kappa s)^{2s}\frac{P_+}{\log P_+}R_S \log(mD) \log(2+\log (XH)) + \sum_{p|w}v_p(a)\log p\\
&\ll (c_0 s)^{2s}\frac{P_+}{\log P_+}R_S \log(mD) \log(2+\log(XH)) + \log H.
\end{aligned}
$$
\end{proof}
\subsection{Upper bound: $1+\eta$ is small}\label{SecUpper}

From now on, $c_1,c_2,...$ are absolute and effectively computable positive constants. We recall that $K\subseteq L \subseteq \C$ and we use the Archimedean place of this embedding, whose absolute value is simply denoted by $|-|$.

The bounds $\prod_{i<j} |\rho_i-\rho_j|^2=D/a^4 \ge H^{-4}$ and $|\rho_i-\rho_j|\ll H$ yield
$$
|\rho_i-\rho_j| \gg H^{-c_1}.
$$
We can and do assume $y\ne 0$.

If $|y|\ll X/H$ for a suitably small implicit constant, then $X = \max\{3,|x|\}$ and $wm=|F(x,y)|=|a\beta_1\beta_2\beta_3|\gg X^3$ so we get
$$
\log X\ll 1+ \log m+\log w
$$
and, using Corollary \ref{Corow} to control the contribution of $w$, one gets
\begin{equation}\label{EqnKeySmall}
\begin{aligned}
\log X&\ll \log m+ s(\kappa s)^{2s}\frac{P_+}{\log P_+} \log(2+\log(XH))R_S\log(m^2D) + \log H\\
&\ll (c_2 s)^{2s}\frac{P_+}{\log P_+} \log(2+\log(XH))R_S\log(mD) + \log H.
\end{aligned}
\end{equation}

For the moment, let us leave the case $|y|\ll X/H$ in standby, keeping in mind that we get the bound \eqref{EqnKeySmall}. Thus, let us now assume
$$
|y|\gg X/H.
$$
We will arrive in \eqref{EqnKeyBig} to a bound of the same shape as \eqref{EqnKeySmall}, and the proof will conclude by analyzing that bound.

From now on we label the roots $\rho_i$'s so that $|\beta_1|$ is minimal.

Minimality of $\beta_1$ gives for $i=2,3$
$$
|y||\rho_i-\rho_1| = |\beta_i-\beta_1| \ll |\beta_i |
$$
so 
$$
|\beta_i|\gg  XH^{-c_3}.
$$
From $wm=|a\beta_1\beta_2\beta_3|$ we get
$$
|\beta_1|\ll wmH^{c_4}X^{-2}.
$$
We deduce
\begin{equation}\label{EqnUpper}
|1+\eta| = \left|  \frac{\rho_3-\rho_2}{\rho_3-\rho_1}\cdot \frac{\beta_1}{\beta_2}  \right| \ll wmH^{c_5} X^{-3}.
\end{equation}

\subsection{Confronting the upper and lower bounds}\label{SecConfront}

From \eqref{EqnUpper} and Lemma \ref{LemmaLowerBound} in the Archimedean case, we get
$$
3\log X - \log (wm)-c_5 \log H \ll  (c_6s)^{2s}\log \max\{2, h(\eta)\}R_S\log(m^2D).
$$
From \eqref{Eqnbdheta} and absorbing $\log m$ we get
$$
\log X \ll \log(wH) + (c_6s)^{2s}R_S\log(mD) \log(2+\log (XH) ).
$$

From this and Corollary \ref{Corow} we deduce
\begin{equation}\label{EqnKeyBig}
\log X \ll (c_7 s)^{2s}\frac{P_+}{\log P_+}R_S \log(mD) \log(2+\log (XH)) +\log H
\end{equation}
where we assume $c_7>c_2$ (see \eqref{EqnKeySmall}). Thus, this bound holds in both cases: $|y| \ll X/H$ and $|y| \gg X/H$.

If $\log X\ll \log(H)$ we are done, so we assume $\log X\gg \log (H)$ with a sufficiently large (absolute and effective) implicit constant. Then

$$
\log X \ll (c_7 s)^{2s}\frac{P_+}{\log P_+}R_S \log(mD) \log(\log X ) 
$$
and we deduce
$$
\log X \ll (c_8 s)^{2s}\frac{P_+}{\log P_+}R_S \log(mD) \log(2+P_+R_S \log(mD) ). 
$$
Adding $\log H$ back to cover the case $\log X\ll \log(H)$, we finally obtain Theorem \ref{ThmThueMahler1}. \qed 

\subsection{From Theorem \ref{ThmThueMahler1} to Theorem \ref{ThmThueMahler2}} Let $\hbf$ and $R$ be the class number and regulator of $K$, and let $S_0$ be the set of non-Archimedean places in $S$. Define
$$
\Omega=\prod_{\pfrak\in S_0}\log \Norm(\pfrak).
$$

From Lemma \ref{LemmaRS} we get $R_S\le\hbf R \Omega$.

The bounds $P_+\le Q^3$, $s\le 3r+3$, and $\Omega \ll 27^r\Theta^3$ follow from considering the degrees of the number fields $K$ and $L$, the fact that $\Gal(L/\Q)$ is a subgroup of $S_3$ containing a $3$-cycle, and splitting of primes (note that the residue degree in $L$ is bounded by $3$).

Finally, to bound $\hbf R$ in terms of $\underline{D}$ we first apply Lemma \ref{LemmaLandau} to get a bound in terms of $D_K$, and then we use Lemma \ref{LemmaHensel} together with the divisibility $D_K|D$ from \eqref{EqnDdiv} in order to get a bound in terms of $\underline{D}$. These reductions show how Theorem \ref{ThmThueMahler2} follows from Theorem \ref{ThmThueMahler1}.

\subsection{From Theorem \ref{ThmThueMahler2} to Corollary \ref{CoroThueMahler}}

Corollary \ref{CoroThueMahler} is obtained from Theorem \ref{ThmThueMahler2} after taking $m=1$, $r$ as the number of prime factors of $F(x,y)$, $Q=Q(x,y)$, and noticing the following estimates:
\begin{itemize}
\item By the prime number theorem $(r+1)^r \ll_\epsilon \exp((1+\epsilon)P_r)\le e^{(1+\epsilon)Q}$ where $P_r$ is the $r$-th prime number.
\item $\Theta \le (\log Q)^{r} \le (\log Q)^{O(Q/\log Q)} \ll_\epsilon e^{\epsilon Q}$ because $r$ is at most the number of primes up to $Q$, which is $O(Q/\log Q)$.
\end{itemize}
Then the lower order factors are absorbed into the $\epsilon$-part of $e^{(6+\epsilon)Q(x,y)}$.


\section{From Thue to Mordell} \label{SecThueToMordell}

In this section we prove Theorem \ref{ThmMordellTruncated}.

The argument to deduce a bound for Mordell's equation from bounds for the Thue equation is classical (see for instance \cite{Stark}) but, for the convenience of the reader, it will be briefly recalled here. 

Take a non-zero integer $k$. Let $x,y\in \Z$ be integral solutions of $y^2=x^3+k$. Let us consider the cubic form in $S$ and $T$
$$
G(S,T) = S^3 - 3xST^2 + 2yT^3.
$$
The absolute value of its discriminant is $D=108|x^3-y^2|=108|k|$. As explained by Stark in Section 4 of \cite{Stark}, Baker \cite{Baker} shows that there is a  change of variables $\gamma\in {\rm GL_2}(\Z)$ that transforms $G(S,T)$ into a homogeneous cubic $F(U,V) = G(\gamma(U,V))$ with $H(F)\le D^{1/2}$.

Let $(u,v) = \gamma^{-1}(1,0)\in\Z^2$, then
$$
F(u,v)= G(1,0)=1.
$$
Thus, $(u,v)$ is a solution of the Thue equation $F(U,V)=1$ with $\gcd(u,v)=1$ and $H=H(F)\le D^{1/2} \ll |k|^{1/2}$. Let us write $X=\max\{3,|u|,|v|\}$. 

If $F$ is reducible then it is an elementary argument to get $\log X\ll \log (2H) \ll \log (2|k|)$, see for instance Section 2(II) in \cite{Baker}. 

If $F$ is irreducible then Theorem \ref{ThmThueMahler2} with $r=0$ and $m=1$, together with the bound $\log (2H) \ll \log (2|k|)$ give

$$
\begin{aligned}
\log X &\ll \log H + (\underline{D})^{1/2}(\log \underline{D})^2(\log D)\log (\underline{D}\log(3D)) \\
&\ll (\underline{k})^{1/2}(\log (2\underline{k}))^2(\log(2|k|))\log(\underline{k}\log (3|k|)).
\end{aligned}
$$

In either case

$$
\log \max\{3,|u|,|v|\} =\log X\ll (\underline{k})^{1/2}(\log (2\underline{k}))^2(\log(2|k|))\log(\underline{k}\log (3|k|)).
$$

It remains to recover $x$ and $y$ from $u$ and $v$. We follow Section 2 of Bennett--Ghadermarzi \cite{BennettGhadermarzi}.

 If $\Hcal_B$ and $\Jcal_B$ denote the Hessian and the Jacobian covariants of a binary cubic form $B$ then for $G(S,T)$ (denoted as $F$ in \cite{BennettGhadermarzi}) one has
 $$
 \begin{aligned}
 |x| &\ll |\Hcal_G(1,0)| = |\Hcal_G(\gamma(u,v))| =  |\Hcal_F(u,v)|\\
 |y| &\ll |\Jcal_G(1,0)| = |\Jcal_G(\gamma(u,v))| =  |\Jcal_F(u,v)|.
 \end{aligned}
 $$
Thus, $|x|$ and $|y|$ are bounded by polynomial expressions in $u$, $v$, and the coefficients of $F$. Thus, $\log \max\{2,|x|,|y|\} \ll \log |k| + \log X$ and Theorem \ref{ThmMordellTruncated} follows. \qed


\section{Bounds for the Faltings height} 

In this section we prove Theorem \ref{ThmHeightConj}.

Let $E$ be an elliptic curve over $\Q$ with integral $j$-invariant. By Pesenti and Szpiro \cite{PesentiSzpiro} we have 
\begin{equation}\label{EqnPS}
\log \Delta_E\ll \log N_E, 
\end{equation}
and since all the primes of bad reduction for $E$ are of additive type (the $j$-invariant is an integer), we have that 
\begin{equation}\label{EqnAdditive}
\rad(\Delta_E) \ll N_E^{1/2}.
\end{equation}
Write a  short Weierstrass equation $y^2 = x^3+Ax+B$ with $A,B$ integers that minimize the absolute value $\Delta'_E$ of the discriminant of the cubic among all such equations. Due to issues at $2$ and $3$ it is not always the case that $\Delta'_E=\Delta_E$, but nevertheless $\Delta'_E| M \Delta_E$ for an effective absolute integer constant $M$ supported at $2$ and $3$; this can be seen from the standard formulas for $\Delta$ using the elliptic curve quantities $c_4$ and $c_6$ for a minimal general Weierstrass equation. One knows \cite{Silverman}:
$$
h(E) \ll \log\max\{2, |A|, |B|\}.
$$
On the other hand,
$$
4A^3+27B^2 = C, \quad \mbox{ with }|C|=\Delta'_E.
$$
Hence
$$
(108B)^2 - (-12 A)^3 = k, \quad k= 432 C. 
$$

From \eqref{EqnAdditive} we deduce $\rad (k) \le 6 \, \rad(\Delta_E) \ll N_E^{1/2}$. Theorem \ref{ThmMordellrad} then gives
$$
\log\max\{2, |A|, |B|\} \ll N_E^{1/2} (\log N_E)^2 \log (|k|)\log(N_E\log(|k|)).
$$

Finally, we note that $\log (|k|) \ll \log \Delta_E \ll \log N_E$ by \eqref{EqnPS}, and Theorem \ref{ThmHeightConj} follows. \qed

\section{Acknowledgments}

This research was supported by ANID Fondecyt Regular grant 1230507 from Chile. We sincerely thank Yuri Bilu and K\'alm\'an Gy\"ory for valuable feedback on an earlier version of this manuscript.

In the preparation of this work, AI models were used to suggest initial exploratory directions, assist with computations, and proofread earlier versions of the manuscript. The author independently reconstructed and verified all the mathematical content that came from the models, and assumes full responsibility for the paper. The AI models used were OpenAI's ChatGPT-5.6 Sol Pro and Anthropic's Claude Opus 5 and Claude Fable 5.



\begin{thebibliography}{9}         


\bibitem{AkhtariVaaler} S. Akhtari, J. Vaaler. \emph{A bound for the exterior product of S-units}. Algebra \& Number Theory,  18(9), (2024), 1589--1617.
 
\bibitem{Baker} A. Baker, \emph{Contributions to the theory of diophantine equations: II. The diophantine equation $y^2=x^3+k$}. Phil. Trans. Royal Soc. (London), A 263 (1968), 193--208.

\bibitem{BennettGhadermarzi} M. Bennett, A. Ghadermarzi, \emph{Mordell's equation: a classical approach}. LMS J. Comput. Math. 18 (2015), no. 1, 633--646.

\bibitem{BiluHanrot} Y. Bilu, G. Hanrot, \emph{Solving Thue equations of high degree}. Journal of Number Theory, 60 (1996), 373--392.

\bibitem{BirchMerriman} B. Birch, J. Merriman, \emph{Finiteness theorems for binary forms with given discriminant}. Proc. London Math. Soc. (3) 24 (1972), 385--394.

\bibitem{BombieriSchmidt} E. Bombieri, W. Schmidt, \emph{On Thue's equation}. Invent. Math. 88 (1987), 69--81.


\bibitem{BugeaudGyory} Y. Bugeaud, K. Gy\"ory, \emph{Bounds for the solutions of Thue-Mahler equations and norm form equations}. Acta Arith. 74 (1996), 273--292.

\bibitem{Coates1} J. Coates, \emph{An effective $p$-adic analogue of a theorem of Thue}. Acta Arith. 15 (1968/69), 279--305.

\bibitem{Coates2} J. Coates, \emph{An effective $p$-adic analogue of a theorem of Thue. II. The greatest prime factor of a binary form}. Acta Arith. 16 (1969/70), 399--412.

\bibitem{Elkies} N. Elkies, \emph{Rational points near curves and small nonzero $|x^3-y^2|$ via lattice reduction}. Algorithmic number theory (Leiden, 2000), 33--63, Lecture Notes in Comput. Sci., 1838, Springer, Berlin, 2000.

\bibitem{EvertseGyory} J.-H. Evertse, K. Gy\"ory, \emph{Unit equations in Diophantine number theory}. Cambridge Studies in Advanced Mathematics, 146. Cambridge University Press, Cambridge, 2015.

\bibitem{Frey} G. Frey, \emph{Links between solutions of A-B=C and elliptic curves}. Number theory (Ulm, 1987), 31--62, Lecture Notes
in Math., 1380, Springer, New York, (1989).

\bibitem{Hall} M. Hall, \emph{The Diophantine equation $x^3-y^2=k$}. Computers in number theory (Proc. Sci. Res. Council Atlas Sympos. No. 2, Oxford, 1969), pp. 173--198, Academic Press, London-New York, 1971.

\bibitem{Juricevic} R. Juricevic, \emph{Explicit estimates of solutions of some Diophantine equations}. Funct. Approx. Comment. Math., 38 (2) (2008) 171--194.

\bibitem{vonKanel} R. von K\"anel, \emph{Integral points on moduli schemes of elliptic curves}. Trans. London Math. Soc. (2014) 1(1) 85--115.

\bibitem{Landau} E. Landau, \emph{Verallgemeinerung eines P\'olyaschen Satzes auf algebraische Zahlk\"orper}. Nachr. Kgl. Ges. Wiss. G\"ottingen, Math.-Phys. Kl. (1918), 478--488.

\bibitem{MantillaSoler} G. Mantilla-Soler, \emph{The spinor genus of the integral trace}. Transactions of the American Mathematical Society, 369(3), (2017), 1611--1626.


\bibitem{MordellTalk} L. Mordell, \emph{A chapter in the theory of numbers}. Cambridge University Press, Cambridge, 1947.

\bibitem{MurtyPasten} M. R. Murty, H. Pasten, \emph{Modular forms and effective Diophantine approximation}. J. Number Theory 133 (2013), no. 11, 3739--3754.


\bibitem{Nakagawa} J. Nakagawa, \emph{Binary forms and orders of algebraic number fields}. Invent. Math. 97(2) (1989), 219--235.

\bibitem{Okazaki} R. Okazaki, \emph{Geometry of a cubic Thue equation}. Publ. Math. Debrecen 61, (2002), 267--314.

\bibitem{PesentiSzpiro} J. Pesenti, L. Szpiro, \emph{In\'egalit\'e du discriminant pour les pinceaux elliptiques \`a r\'eductions quelconques}. Compositio Math. 120 (2000), no. 1, 83--117.

\bibitem{Serre} J.-P. Serre, \emph{Local fields}. Graduate Texts in Mathematics, 67. Springer-Verlag, New York-Berlin, 1979. viii+241 pp.

\bibitem{Silverman} J. Silverman, \emph{Heights and elliptic curves}. Arithmetic geometry (Storrs, Conn., 1984), 253--265, Springer, New York, 1986.


\bibitem{Sprindzuk} V. Sprindzuk, \emph{Classical Diophantine Equations}. Lect. Notes in Math. 1559 (1993), Springer-Verlag, Berlin \& New York.

\bibitem{Stark} H. Stark, \emph{Effective estimates of solutions of some diophantine equations}. Acta Arith. 24 (1973) 251--259.

\bibitem{Szpiro} L. Szpiro, \emph{Pr\'esentation de la th\'eorie d'Arak\'elov}. Current trends in arithmetical algebraic geometry (Arcata, Calif., 1985), 279--293, Contemp. Math., 67, Amer. Math. Soc., Providence, RI, 1987.

\bibitem{Voutier} P. Voutier, \emph{An effective lower bound for the height of algebraic numbers}. Acta Arith. 74 (1996), 81--95.

\bibitem{Wood} M. Wood, \emph{Rings and ideals parameterized by binary $n$-ic forms}. J. Lond. Math. Soc. (2) 83(1) (2011), 208--231.

\end{thebibliography}
\end{document}